\definecolor{darkgreen}{rgb}{0,0.5,0}
\definecolor{darkred}{rgb}{0.5,0,0}
\definecolor{darkblue}{rgb}{0.004,0.396,0.741}
\definecolor{gcolor}{rgb}{0.004,0.396,0.741}	% Pantone 300 C in RGB
\definecolor{warning}{rgb}{1.0,0.6,0.0}	% MATLAB WarningD(\sigma_1,\ldots,\sigma_n)
\definecolor{gray}{rgb}{0.6,0.6,0.6}
\definecolor{mygreen}{rgb}{0,0.6,0}
\definecolor{mygray}{rgb}{0.5,0.5,0.5}
\definecolor{lightgray}{rgb}{0.925,0.925,0.925}
\definecolor{mymauve}{rgb}{0.58,0,0.82}
\definecolor{gcolor}{rgb}{0.004,0.396,0.741}
\tiny\color{mygray}, 
\theoremstyle{plain}
\newtheorem{theorem}{Theorem}[section]
\newtheorem{corollary}{Corollary}[section]
\theoremstyle{definition}
\theoremstyle{remark}
\newtheorem{remark}{Remark}[section]
\numberwithin{equation}{section}
\begin{document}

\title[Asymptotic Expansions at the Soft Edge II: Level Densities]{Asymptotic Expansions of Gaussian and Laguerre Ensembles at the Soft Edge II: Level Densities}
\author{Folkmar Bornemann}
\address{Department of Mathematics, Technical University of Munich, 80290 Munich, Germany}
\email{bornemann@tum.de}

%\date{\today}

\begin{abstract}
We continue our work \cite{B25-1} on asymptotic expansions at the soft edge for the classical $n$-dimensional  Gaussian and Laguerre random matrix ensembles. By revisiting the construction of the associated skew-orthogonal polynomials in terms of wave functions, we obtain concise expressions for the level densities that are well suited for proving asymptotic expansions in powers of a certain parameter $h \asymp n^{-2/3}$. In the unitary case, the expansion for the level density can be used to reconstruct the first correction term in an established asymptotic expansion of the associated generating function. In the orthogonal and symplectic cases, we can even reconstruct the conjectured first and second correction terms.
\end{abstract}
\keywords{}
\subjclass[2020]{}

\maketitle
\setcounter{tocdepth}{1}
%\tableofcontents

\section{Introduction}\label{sect:intro}

An important statistic of a point process with $n$ levels $x_1,\ldots,x_n$ on an interval $\Omega\subset\R$ is provided by the number distribution $N(x)$ and its expected value,
\begin{subequations}
\begin{equation}
\bar N(x) = \E N(x), \quad N(x) = \# \{j: x_j \leq x\}.
\end{equation}
If the levels of the point process possess a joint probability density $P(x_1,\ldots,x_n)$, the level density, or one-point function, is defined as
\begin{equation}\label{eq:density}
\rho(x) = \bar{N}'(x) = n\int_\Omega\cdots\int_\Omega P(x,x_2,\ldots,x_n)\,dx_2\cdots \,dx_n;
\end{equation}
it satisfies then
\begin{equation}\label{eq:mass}
\int_\Omega\rho(x)\,dx = n.
\end{equation}
\end{subequations}
In this paper, we study the level densities of the orthogonal ($\beta=1$), unitary ($\beta=2$), and symplectic ($\beta=4$) Gaussian and Laguerre random matrix ensembles of dimension $n$, and their asymptotic expansions for the scaling limit at the soft edge as $n\to\infty$. The levels (eigenvalues) of the Gaussian ensembles constitute a point process on $\Omega=(-\infty,\infty)$, while those of the Laguerre ensembles constitute one on $\Omega=(0,\infty)$; the joint probability densities and their parametrizations are recalled in Section~\ref{sect:ensembles}. We write
\[
N_{\beta,n}, \quad P_{\beta,n}, \quad \rho_{\beta,n}, \quad \ldots
\]
to signify the dependence on $\beta$ and $n$; adding a certain additional parameter $p$ for the Laguerre ensembles if necessary. We refer to the index consistency rule of Section~\ref{sect:index} on why the Laguerre parameter $p$ can largely be omitted from the notation and why we can cast the Gaussian cases as the case $p=\infty$ of the Laguerre ones.

\subsection{Concise expressions}
Since the unitary ensembles constitute a determinantal point process with a correlation kernel $K_n(x,y)$, their level densities are known since the pioneering work of Dyson in the 1960s to be
\[
\rho_{2,n}(x) = K_n(x,x);
\]
expressible in terms of orthogonal polynomials. Using the theory of skew-orthogonal polynomials and the quaternionic determinants of Dyson, Mahoux and Mehta \cite{MR1190440} found expression for $\rho_{1,n}$ and $\rho_{4,n}$ -- which are, however, not particularly inviting for asymptotic analysis. Instead, we follow the approach of Tracy and Widom \cite{MR1657844} to that theory, representing the gap probabilities in terms of Fredholm determinants of operators with matrix kernels. By revisiting the construction of the skew-orthogonal polynomials in terms of certain associated wave functions $\psi^\sharp_n$ and denoting their antiderivatives by
\[
\Psi^\sharp_n(x) = -\int_x^\infty \psi_n^\sharp(t)\,dt,
\]
we find in Section~\ref{sect:densities} the concise expressions
\begin{equation}\label{eq:rho14Intro}
\begin{aligned}
\rho_{1,2n+1} &= \rho_{2,2n+1} + \tfrac{1}{2}\psi_{2n}^\sharp\cdot\big(1 + \Psi_{2n+1}^\sharp\big),\\*[2mm]
2\rho_{4,n} &=  \rho_{2,2n+1} + \tfrac{1}{2}\psi_{2n}^\sharp\cdot\big(0 + \Psi_{2n+1}^\sharp\big).
\end{aligned}
\end{equation}
For $\beta=1$, the same expression holds for even dimension with accordingly adjusted indices. Still, we presented it here in the form \eqref{eq:rho14Intro} to show the striking duality between the orthogonal and symplectic cases; a probabilistic proof of that relation is given in Appendix~\ref{app:probwave}.  

\subsection{Scaling limits at the soft edge}
It is known that the largest level (the so-called soft edge of the point process) of the Gaussian and Laguerre ensembles is asymptotically equal, almost surely, to a certain quantity $\mu_n$ when $n\to\infty$. When zooming into the vicinity of the largest level with a certain scale $\sigma_n$, Forrester \cite[§§3.1/3.5]{MR1236195} established\footnote{Here, and unless stated otherwise in other references to the existing literature, the Laguerre parameter $p$ is subject to $q=p-n>-1$ being bounded from above.} that
\begin{subequations}\label{eq:leadingorder}
\begin{equation}
\lim_{n\to\infty} \rho_{2,n}(x)\frac{dx}{ds} \Big|_{x=\mu_n + \sigma_n s} = - s \Ai(s)^2 + \Ai'(s)^2,
\end{equation}
based on the Plancherel--Rotach type asymptotics  \cite[Eqs.~(8.22.11/14)]{MR0372517} of the Hermite and Laguerre polynomials. Applied to the wave functions $\psi_n^\sharp$, the Plancherel--Rotach type asymptotics takes the form
\[
\psi_{n}^\sharp(x)\frac{dx}{dt} \Big|_{x=\mu_n + \sigma_n s} = \Ai(s) + n^{-1/3} O(e^{-s}),\quad 
\Psi_{n}^\sharp(x)\Big|_{x=\mu_n + \sigma_n s} = \AI_0(s) + n^{-1/3} O(e^{-s});
\]
cf. Appendix~\ref{eq:PlancherelRotach}. Here, we write 
\[
\AI_\nu(x) = \nu -\int_x^{\infty} \Ai(t)\,dt
\]
to designate the antiderivative of $\Ai$ satisfying $\AI_\nu(\infty) = \nu$. Thus, the relations \eqref{eq:rho14Intro} imply
\begin{equation}\label{eq:rho4double}
\begin{aligned}
\lim_{n\to\infty} \rho_{1,n}(x)\frac{dx}{ds} \Big|_{x=\mu_n + \sigma_n s} &= - s \Ai(s)^2 + \Ai'(s)^2 + \frac{1}{2}\Ai(s)\AI_1(s),\\*[2mm]
\lim_{n\to\infty} 2\rho_{4,n}(x)\frac{dx}{ds} \Big|_{x=\mu_{2n} + \sigma_{2n} s}&= - s \Ai(s)^2 + \Ai'(s)^2 + \frac{1}{2}\Ai(s)\AI_0(s).
\end{aligned}
\end{equation}
\end{subequations}
These limits were first established by Forrester, Nagao, and Honner \cite[Eqs.~(5.44/45)]{MR1707162}. 

\subsection{Convergence rates and first correction terms}
Based on the error estimates of the Plancherel--Rotach type asymptotics, the scaling limits exhibit a convergence rate of order $O(n^{-1/3})$. It was later observed by  Garoni, Forrester, and Frankel \cite[Proposition 2]{MR2178598} for GUE, and for LUE by Choup \cite[Corollary 3.7]{MR2178598} with a certain adjustment of $\mu_n$ to the Laguerre parameter, that the error is actually of order $O(n^{-2/3})$ -- moreover, the functional form of that term up to $O(n^{-1})$ can be found there. Forrester and Trinh \cite[Remark~6(b)]{MR3802426} extended this study to the case of a Laguerre parameter $p$ proportional to $n$.

In the orthogonal and symplectic cases, the functional form of the  $O(n^{-1/3})$ terms was given by Forrester, Frankel, and Garoni \cite[Propositions~9--11]{MR2208159}.\footnote{However, the forms \cite[Eqs.~(5.1/4)]{MR2208159} of the $O(n^{-1/3})$ terms stated for the Gaussian cases are systematically affected by a typo introduced in \cite[Eq.~(3.14)]{MR2208159}, where $(2n)^{1/2}$ should be corrected to $(2n+1)^{1/2}$; see \cite[Remark 1.7]{MR4712577}. The same remark applies to \cite[Eqs.~(7.9), (7.101), (7.149)]{MR2641363}.} One can observe (see, e.g., \cite[Remark 1.7]{MR4712577}) that those $O(n^{-1/3})$ terms are actually the derivatives of the leading terms and can thus be eliminated by applying a shift in $n$ to $\mu_n$, revealing an $O(n^{-2/3})$ rate.

\subsection{Asymptotic expansions}

Continuing our work \cite{B25-1} on asymptotic expansions at the soft edge (which generalizes previous work \cite{MR2294977,MR3025686,MR2888709} on optimal convergence rates), we will prove in Section~\ref{sect:expansions} that applying a shift in $n$ to the scaling parameters $\mu_n, \sigma_n$ leads to `second-order accuracy', that is, to an optimal convergence rate of $O(n^{-2/3})$. In fact, we show that the rate estimate embeds into an asymptotic expansion in powers of a parameter $h_n$, where $h_n \asymp n^{-2/3}$. Specifically, we introduce the shifts (also applying to the parameter $p$)
\begin{subequations}\label{eq:nprime}
\begin{equation}\label{eq:nprimeDef}
n' = 
\begin{cases}
n - \frac12, &\quad \beta = 1,\\*[1mm]
n,           &\quad \beta = 2,\\*[1mm]
2n+ \frac12, &\quad \beta = 4,
\end{cases}
\end{equation}
noting that the left hand sides of \eqref{eq:rho14Intro} are then subject to exactly the same modification,
\begin{equation}\label{nprime14}
(2n+1)'\big|_{\beta=1} = n'\big|_{\beta=4}.
\end{equation}
\end{subequations}
In Section~\ref{sect:expansions}, we infer from the kernel expansions established in \cite[Lemma~2.1/3.1]{B25-1} an asymptotic expansion of the form
\begin{subequations}\label{eq:omega}
\begin{equation}
\rho_{2,n}(x)\frac{dx}{ds}\bigg|_{x=\mu_n + \sigma_n s} = \sum_{j=0}^m \omega_{2,j}(s;\tau_n) h_n^j + h_n^{m+1}\cdot O(e^{-2s})\quad (n\to\infty),
\end{equation}
with a parameter $0\leq \tau_n\leq 1$ satisfying $\tau_n=0$ in the Gaussian case and $\tau_n\to0^+$ as $p\to\infty$ in the Laguerre case.
With certain polynomial coefficients $\tilde p_j,\tilde q_j,\tilde r_j\in \Q[s,\tau]$, we have
\begin{equation}
\omega_{2,j} = \tilde p_j\Ai^2 + \tilde q_j \Ai'^2 + \tilde r_j \Ai\Ai'.
\end{equation}
Similarly, for $\beta=1,4$, we get asymptotic expansions of the form
\begin{equation}\label{eq:rho4doubleAsymp}
\begin{aligned}
\rho_{1,n}(x)\frac{dx}{ds}\bigg|_{x=\mu_{n'} + \sigma_{n'} s} &= \sum_{j=0}^m \omega_{1,j}(s;\tau_{n'}) h_{n'}^j + h_{n'}^{m+1}\cdot O(e^{-2s}),\\*[2mm]
2\rho_{4,n}(x)\frac{dx}{ds}\bigg|_{x=\mu_{n'} + \sigma_{n'} s} &= \sum_{j=0}^m \omega_{4,j}(s;\tau_{n'}) h_{n'}^j + h_{n'}^{m+1}\cdot O(e^{-2s}),
\end{aligned}\quad (n\to\infty),
\end{equation}
where, with a {\em $\beta$-independent} set of certain polynomial coefficients $p_j, q_j, r_j, u_j, v_j\in \Q[s,\tau]$,\footnote{Here, $\llbracket\cdot \rrbracket$ denotes the Iverson bracket: $\llbracket \mathcal P \rrbracket=1$ if the proposition $\mathcal P$ is true, and $\llbracket \mathcal P \rrbracket=0$ otherwise.} 
\begin{equation}
\omega_{\beta,j} =  p_j\Ai^2 +  q_j \Ai'^2 +  r_j \Ai\Ai' +u_j \Ai\AI_\nu +v_j \Ai'\AI_\nu\Big|_{\nu = \llbracket \beta=1\rrbracket}.
\end{equation}
\end{subequations}
We note that $\omega_{1,j}$ and $\omega_{4,j}$ only differ in the choice of the antiderivative, that is $\AI_1$ or $\AI_0$, which is explained by \eqref{eq:rho14Intro}, \eqref{nprime14}, and the form of the expansions \eqref{eq:expanHerPrim}, \eqref{eq:expanLagPrim} for $\Psi^\sharp_n$.

Fig.~\ref{fig:1} illustrates that the asymptotic expansions \eqref{eq:omega} notably improve upon the leading-order approximations \eqref{eq:leadingorder}, even for comparatively small dimensions (cf. also \cite[Fig.~1.b]{B25-3}).

\begin{figure}[tbp]
\includegraphics[width=0.38\textwidth]{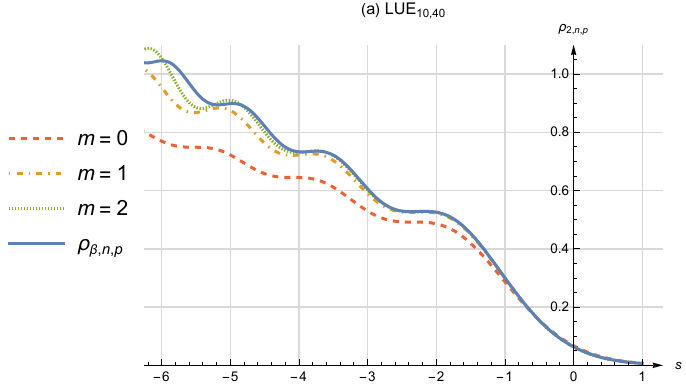}\hfil
\includegraphics[width=0.3\textwidth]{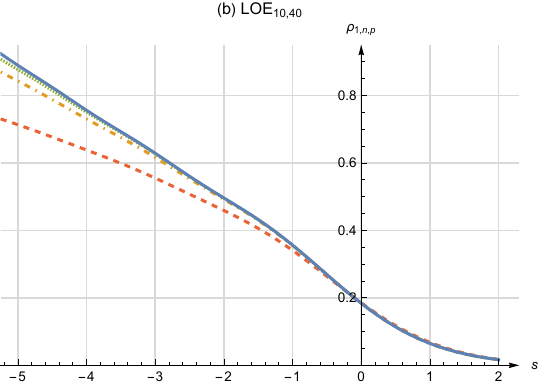}\hfil
\includegraphics[width=0.3\textwidth]{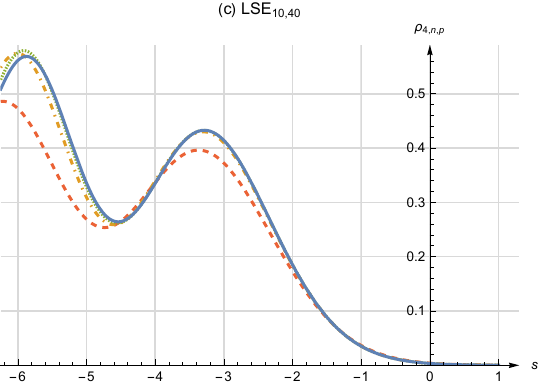}
\caption{{\footnotesize Plots of the rescaled densities $\rho_{\beta,n,p}$ for $n=10$, $p=40$ (blue solid line) together with the asymptotic expansions \eqref{eq:omega} truncated at $m=0,1,2$ (red dashed, orange dot-dashed, green dotted lines); the polynomial coefficients are taken from Theorems~\ref{thm:LUE}/\ref{thm:LOELSE}. Left panel (a): LUE ($\beta=2$); middle panel (b): LOE ($\beta=1$); right panel (c): LSE ($\beta=4$).}}
\label{fig:1}
\end{figure}

\subsection{Reconstruction of the expansion of the generating functions}\label{sect:GenTheory}
Finally, in Section~\ref{sect:generating}, we will relate the findings of this paper to the more general theory of \cite{B25-3}. There, we consider the generating function \cite[p.~329]{MR2641363}\footnote{For $0\leq \xi \leq 1$,
$E(x;\xi)$ has a simple probabilistic interpretation. If we keep each level of the underlying point process independently with probability $\xi$ and drop it otherwise, such a $\xi$-thinning yields
\[
\prob\big(\text{maximum level of a $\xi$-thinning is $\leq x$}\big) = E(x;\xi).
\]}
\[
E(x;\xi) = \int_\Omega\cdots \int_\Omega P(x_1,\ldots,x_n) \prod_{j=1}^n \big(1-\xi \llbracket x_j > x\rrbracket \big)\,dx_1\cdots dx_n,
\]
which encodes, among many other things, the distribution of the largest level for $\xi=1$ and the level density by expanding at $\xi=0$:
\begin{equation}\label{eq:derivedF}
\prob\big(\max_j x_j \leq x\big) = E(x;\xi)\Big|_{\xi=1},\qquad \rho(x) = \frac{\partial^2}{\partial x\partial\xi} E(x;\xi)\Big|_{\xi=0}.
\end{equation}
We prove in \cite{B25-3} for $\beta=2$, and conjecture for $\beta=1,4$, that
\begin{subequations}\label{eq:genTheory}
\begin{equation}
E_{\beta,n}(x;\xi)\Big|_{x=\mu_{n'} + \sigma_{n'} s}  = F_\beta(s;\xi) + \sum_{j=1}^m G_{\beta,j}(s,\tau_{n'};\xi)h_{n'}^j + h_{n'}^{m+1}O(e^{-s}),
\end{equation}
where, with certain polynomials $P_{\beta,j,k} \in \Q[s,\tau]$ that are independent of $\xi$,
\begin{equation}
G_{\beta,j} = \sum_{k=1}^{2j} P_{\beta,j,k}\cdot  \partial_s^k F_\beta.
\end{equation}
\end{subequations}
Moreover, there is a $\beta=1$ to $\beta=4$ duality that gives $P_{1,j,k} = P_{4,j,k}$. On one hand,
by the first relation in~\eqref{eq:derivedF}, the polynomials $P_{\beta,j,k}$ are exactly those established for the distribution of the largest level in \cite{B25-1}. On the other hand, by comparing coefficients with the expansions~\eqref{eq:omega} of the level densities proved in Section~\ref{sect:expansions}, we get  from the second equality in~\eqref{eq:derivedF} that
\begin{equation}\label{eq:omegaAlt}
\omega_{\beta,j} = \partial_s P_{\beta,j,1} \cdot \omega_{\beta,0} + \sum_{k=1}^{2j} (P_{\beta,j,k} + \partial_s P_{\beta,j,k+1})\cdot \partial_s^k \omega_{\beta,0} \quad (j=1,2,\ldots),
\end{equation}
setting $P_{\beta,j,2j+1}=0$.
Thus, using the Airy differential equation $\Ai''(s) = s \Ai(s)$ and the leading order terms \eqref{eq:leadingorder},
\[
\omega_{2,0}(s) = -s \Ai(s)^2 + \Ai'(s)^2, \quad \omega_{\beta,0}(s) = \omega_{2,0}(s) + \frac{1}{2}\Ai(s)\AI_{\llbracket \beta=1\rrbracket}(s) \quad(\beta=1,4),
\]
we get a structural consistency between \eqref{eq:omegaAlt} and \eqref{eq:omega}. The obvious way to use that relation would be to re-calculate the polynomials $\tilde p_j,\tilde q_j, \tilde r_j$ from the set $P_{2,j,k}$ and the polynomials $p_j, q_j,  r_j, u_j, v_j$ from the set $P_{1,j,k}$, which leads to a perfect agreement in all instances checked. However, as shown in Section~\ref{sect:generating}, using the algebraic independence, over the field $\C(s)$ of rational functions, of the three functions ($\nu$ fixed)
\[
\Ai, \Ai', \AI_\nu,
\]
recently established in \cite{B25-2}, it is possible to go the other way and uniquely reconstruct the first $14$ polynomials
\begin{equation}\label{eq:GenPoly}
\begin{gathered}
P_{\beta,1,1}, \quad P_{\beta,1,2} \quad (\beta=1,2,4),\\*[1mm] 
P_{\beta,2,1}, \quad P_{\beta,2,2}, \quad P_{\beta,2,3}, \quad P_{\beta,2,4} \quad (\beta=1,4),
\end{gathered}
\end{equation}
from $\tilde p_1,\tilde q_1, \tilde r_1$ and  $p_j, q_j,  r_j, u_j, v_j$ ($j=1,2$). The agreement with those polynomials calculated from the more general theory provides further compelling evidence supporting the conjectures underlying the cases $\beta=1,4$ in \cite{B25-3}, \cite{B25-1}: only the final form \eqref{eq:genTheory} of the result concerning the generating function is assumed here, without relying on any specific details of the theory.

\section{Matrix Ensembles and Skew-Orthonormal Wave Functions}\label{sect:ensembles}

This section recalls, first, some notation and facts from random matrix theory (cf., e.g., the monographs \cite{MR2760897,MR2641363,Mehta04}) and aims, second, at refining them by a unified construction of the skew-orthogonal polynomials associated with the classical Hermite and Laguerre weights. Based on working with the corresponding wave functions, this construction is considerably simpler than those in \cite{MR1762659, MR2560290}.

\subsection{Parametrization of the matrix ensembles}\label{subsect:ensembles}
We follow the choice of the parameters made in \cite{B25-1}. Functions related to the matrix ensembles are defined on $\Omega$, which is $(-\infty,\infty)$ in the Gaussian cases and $(0,\infty)$ in the Laguerre ones.
The joint probability density of the (unordered) levels $x_1,\ldots,x_n$ of an $n$-dimensional ensemble  is given in the form
\begin{equation}\label{eq:joint}
P_\beta(x_1,\ldots,x_n) \propto |\Delta(x_1,\ldots,x_n)|^\beta \prod_{j=1}^n w_\beta(x_j),
\end{equation}
where $\Delta(x_1,\ldots,x_n) = \prod_{j>k} (x_j - x_k)$ denotes the Vandermonde determinant and the weight functions $w_\beta(\xi)$ are given, for the Gaussian ensembles, by the Hermite weights
\begin{subequations}\label{eq:alphabeta}
\begin{equation}
w_\beta(x) = e^{-c_\beta x^2} \quad (x\in\R)
\end{equation}
and, for the Laguerre ensembles with $\alpha>-1$, by the Laguerre weights
\begin{equation}
w_\beta(x) = x^\alpha e^{-c_\beta x} \quad (x>0).
\end{equation}
Here, $\beta=1,2,4$ encodes whether the ensemble is orthogonal, unitary, or symplectic. However, we replace the Laguerre parameter $\alpha$ with the parameter (generally a real number)
\begin{equation}
p= n - 1 + \frac{2 (\alpha + 1)}{\beta} > n-1.
\end{equation}
If $p$ is an integer, the levels of the Laguerre ensemble are distributed as the eigenvalues of the $n$-variate Wishart distribution with $p$ degrees of freedom. Since the Wishart distribution exhibits a $p\leftrightarrow n$ symmetry \cite[§1]{B25-1}, the formulas for the  Laguerre ensemble can be put to a form reflecting that symmetry, which results in considerable simplifications. Consequently, we write $\GbE_n$ for the Gaussian ensembles and $\LbE_{n,p}$ for the Laguerre ensembles. We fix the scale of the levels by
\begin{equation}
c_\beta = \begin{cases}
1/2 &\quad \beta = 1,\\*[1mm]
1 & \quad \beta = 2,4,
\end{cases}
\end{equation}
\end{subequations}
which facilitates the Forrester--Rains interrelations \cite[§§4--5]{MR1842786}\cite[Theorem~2]{MR2461989},
\begin{equation}\label{eq:ForresterRains}
\
\GSE_n = \even(\GOE_{2n+1}),\quad \LSE_{n,p} = \even(\LOE_{2n+1,2p+1}),
\end{equation}
meaning that the ordered levels of the symplectic cases on the left are distributed in the same way as every second level of the corresponding orthogonal cases on the right. 

\subsection{Index consistency rule}\label{sect:index}
In the interrelation \eqref{eq:ForresterRains}, and many formulae to follow for the Laguerre case, we get double indices $j,\hat\jmath$ that take the parallel form 
\[
j= \mu n + \kappa, \quad \hat\jmath = \mu p + \kappa,
\]
which quickly becomes cumbersome and repetitive. Fixing $p$ for a given $n$, the second index carries no significant additional information and is omitted as far as possible. It can be reconstructed according to the following consistency rule: every formula carries a {\em unique} hidden parameter $q>-1$, such that single indices $j$ represent index pairs $j,\hat\jmath$ with
\[
\hat\jmath = 
\begin{cases} 
j + q &\text{for terms defined at $\beta=1,2$ as well as for the wave functions},\\*[1mm]
j + q/2 &\text{for terms defined at $\beta=4$}.
\end{cases} 
\]
For instance, the hidden parameter in the Laguerre part of \eqref{eq:ForresterRains} is consistently $q=2(p-n)$ when the $\beta=4$ rule is applied to the left-hand side and the $\beta=1$ rule to the right.

\begin{remark} The special treatment of $\beta=4$ can be avoided by defining the dimension of a symplectic ensemble $\SE_n$ as $\bar n = 2n$ (adjusting, likewise, the Laguerre parameter to $\bar p = 2p$). Each level then acquires multiplicity two, as when quaternions in the matrix model are represented by complex~$2\times 2$ matrices. Hence, $2\rho_{4,n}$ is replaced by $\rho_{4,\bar n}$ in \eqref{eq:rho4double}, \eqref{eq:rho4doubleAsymp}, and the $\beta=4$ case of the transformation \eqref{eq:nprimeDef} reduces to $\bar n' = \bar n + \tfrac{1}{2}$, making the duality between the orthogonal and symplectic cases even more transparent. We refrain, however, from adopting this convention, as it departs from standard usage.
\end{remark}

According to \eqref{eq:alphabeta}, here and in what follows, the Laguerre weight functions associated with a hidden parameter $q$ are given by\footnote{Relating $w_1$ to $\sqrt{w_2}$ and $\sqrt{w_4}$ arises from the fact that the skew-inner product $\langle\cdot,\cdot\rangle_1$ is defined as a double integral, whereas the (skew-)inner products $\langle\cdot,\cdot\rangle_2$ and $\langle\cdot,\cdot\rangle_4$ are defined as single integrals; see \eqref{eq:inner}.}
\begin{subequations}\label{eq:qweights}
\begin{equation}
w_1(x) = x^{(q-1)/2} e^{-x/2}, \quad \sqrt{w_2(x)} = x^{q/2} e^{-x/2},\quad \sqrt{w_4(x)} = x^{(q+1)/2} e^{-x/2}.
\end{equation}
In particular, we have $\sqrt{w_4} = w_2/w_1$, which is exactly the condition in \cite[Theorem 4.7]{MR1842786} underlying \eqref{eq:ForresterRains}. This condition is also satisfied in the case of the Gaussian ensembles, where
\begin{equation}
w_1(x) = \sqrt{w_2(x)} = \sqrt{w_4(x)} = e^{-x^2/2}.
\end{equation}
\end{subequations}
To combine similar formulae for $\GbE$ and $\LbE$ in a single notation, we associate the Gaussian ensembles formally with an omittable second index $\infty$, corresponding to a hidden parameter $q=\infty$. This notation is suggested by the $\LbE_{n,p}\to \GbE_n$ transition law for $p\to\infty$.\footnote{See, e.g., \cite[Appendix B]{B25-1}, \cite{MR3413957} or, in the case of the $\LOE$ with integer $p$, \cite[Corollary 13.3.2]{MR1990662} and \cite[Corollary 9.5.7]{MR652932}.} In fact, for $n$ fixed, all formulae for the Gaussian ensembles can be obtained from those for the Laguerre ensembles by using the scaling limits \eqref{eq:scalingLaw} discussed in Appendix~\ref{app:ScalingLaw}. However, we will give separate proofs for the Gaussian ensembles and Hermite wave functions, serving as a consistency check. 

\subsection{Skew inner products} We consider the class ${\mathcal F}$ of integrable smooth functions $f$ on $\Omega$ with exponential decay as $x\to \pm \infty$.
%To abbreviate notation, we write
%\[
%\int f(x)\,dx
%\]
%for the integral taken over $(-\infty,\infty)$ in the Hermite case and over $(0,\infty)$ in the Laguerre case. 
We define the integral operator $\epsilon$ by
\begin{equation}\label{eq:epsilon}
(\epsilon f)(x) = \frac{1}{2}\int_\Omega \sgn(x-y) f(y)\,dy = -\int_x^\infty f(y)\,dy + \frac{1}{2}\int_\Omega f(y)\,dy \quad (f \in {\mathcal F}).
\end{equation}
On one hand, $\epsilon f$ is an antiderivative of $f$, satisfying $(\epsilon f)'=f$, with the constant of integration fixed by
\begin{equation}\label{eq:int}
\int_\Omega f(y)\,dy = \epsilon f^\sharp(\infty), \quad f^\sharp = 2f.
\end{equation}
On the other hand, when $f'\in{\mathcal F}$, we have that $\epsilon(f')=f$ is unconditionally true in the Gaussian case, and in the Laguerre case if $f(0)=0$. Now, when the corresponding integrands belong to ${\mathcal F}$, we define the $L^2$ inner product and the skew inner products for $\beta=1,4$ by\footnote{At various places in the literature, the skew inner products differ in sign or come without the factor $1/2$.}
\begin{equation}\label{eq:inner}
\langle f,g\rangle_2 = \int_\Omega f(x)g(x)\,dx, \quad 
\langle f,g\rangle_1 = \langle  f,\epsilon g\rangle_2, \quad
\langle f,g\rangle_4 = \frac{1}{2}\left(\langle  f',g\rangle_2 - \langle  f,g'\rangle_2\right).
\end{equation}
By a partial integration, we see that the simplification $\langle f,g\rangle_4 = \langle  f',g\rangle_2$ 
is unconditionally true in the Gaussian case, and in the Laguerre case if $f(0)=0$ or $g(0)=0$.

\subsection{Orthonormal wave functions}\label{app:func} The Hermite and Laguerre wave functions\footnote{In lieu of a better name, we call them `wave functions' since those associated with the Hermite polynomials are the oscillator wave functions of quantum physics.} are % (cf.~\cite[Eqs.~(2.2/3.1)]{B25-1}) 
\begin{equation}\label{eq:phi}
\phi_{n,\infty}(x) = \frac{e^{-x^2/2}}{\pi^{1/4}\sqrt{2^nn!}} H_n(x), \quad \phi_{n,p}(x) = \sqrt{\frac{n!}{\Gamma(p+1)}}\, x^{(p-n)/2} e^{-x/2}(-1)^n L_n^{(p-n)}(x),
\end{equation}
where $H_n$ and $L_n^{(\alpha)}$ denote the Hermite and Laguerre polynomials as defined in \cite[§§6.1--2]{MR1688958}.
Since the definition of the Laguerre polynomials extends to arbitrary complex values of the parameter $\alpha$, the associated wave functions are well-defined for any real $p>-1$. If $p$ is an integer, the fundamental $p\leftrightarrow n$ symmetry of the Laguerre wave functions mentioned in Section~\ref{subsect:ensembles} is revealed by \cite[Eq.~(5.2.1)]{MR0372517}, which can be put to the form
\begin{equation}\label{eq:Whittaker}
n!\, x^{(p-n)/2} (-1)^n L_n^{(p-n)}(x) = p\hspace{0.2mm}!\, x^{(n-p)/2} (-1)^p L_p^{(n-p)}(x)\quad (n,p=0,1,2,\ldots).
\end{equation}
Using the index consistency rule of Section~\ref{sect:index}, we can rewrite the known orthogonality relations   \cite[Eqs.~(6.1.5/6.2.3)]{MR1688958} of the Hermite and Laguerre polynomials in form of the orthonormality of the associated wave functions,
\begin{equation}\label{eq:L2ortho}
\langle \phi_n,\phi_{m}\rangle_2 = \llbracket n=m\rrbracket.
\end{equation}
Here, for reasons of integrability in the Laguerre case, we must impose the constraint $q > - 1$ on the hidden real parameter $q$.
By construction, the functions $\phi_n/\sqrt{w_2}$ are the classical orthonormal polynomials of degree~$n$ associated with the weight $w_2$ displayed in \eqref{eq:qweights}.

\begin{remark} Rescaling the three-term-recurrences \cite[Eqs.~(6.1.10/6.2.5)]{MR1688958} of the Hermite and Laguerre polynomials yields, for $n=0,1,2,\ldots$\,, that
\[
\begin{aligned}
x \phi_{n,\infty} &= \sqrt{\frac{n}2}\, \phi_{n-1,\infty} + \sqrt{\frac{n+1}2}\, \phi_{n+1,\infty},\\*[2mm]
(x-(n+p+1)) \phi_{n,p} &= \sqrt{\phantom{(}\!\!\!np} \,\phi_{n-1,p-1} + \sqrt{(n+1)(p+1)} \,\phi_{n+1,p+1},\\*[2mm]
\end{aligned}
\]
initialized in both cases with $\phi_{-1}=0$ and $\phi_0 \propto \sqrt{w_2}$, so that $\langle \phi_0,\phi_0\rangle_2 = 1$. However, we will not use these recurrences; their role will be taken by those in Theorem~\ref{thm:diffchi}.
\end{remark}

\subsection{Bi-orthonormal wave functions}\label{app:funcSkew}

We consider the induced function systems
\begin{equation}\label{eq:chipsi}
\begin{aligned}
\chi_{n,\infty}(x) &= \gamma_n \phi_{n,\infty}(x),& \psi_{n,\infty}(x) &= \gamma_n^{-1} \phi_{n,\infty}(x),\\*[2mm]
\chi_{n,p}(x) &= \gamma_n\gamma_p(x/2)^{1/2} \phi_{n,p}(x),& \psi_{n,p}(x) &= \gamma_n^{-1}\gamma_p^{-1}(x/2)^{-1/2} \phi_{n,p}(x) \quad (p<\infty),
\end{aligned}
\end{equation} 
where the normalization coefficients\footnote{Choosing them in this particular way ensures that the formulae displayed in Theorems~\ref{thm:diffchi}/\ref{thm:E}, Corollary~\ref{cor:int}, and Eq.~\eqref{eq:pGOE} become especially simple all at once.} (the second variant is obtained by applying Legendre's duplication formula \cite[Eq.~(1.5.1)]{MR1688958})
\begin{equation}\label{eq:gammaNu}
\gamma_\nu = \frac{2^{(\nu+1)/2} \Gamma\left(\frac{\nu+1}{2}\right)}{\pi^{1/4}\sqrt{\Gamma(\nu+1)}} = \sqrt{\frac{2\,\Gamma\!\left(\frac{\nu+1}{2}\right)}{\Gamma\!\left(\frac{\nu}{2}+1\right)}} \quad (\nu > -1),
\end{equation} 
satisfy the functional equation
\begin{equation}\label{eq:agamma}
\frac12\gamma_\nu\gamma_{\nu-1} = \sqrt\frac2{\nu} \quad (\nu>0).
\end{equation}
Using the index consistency rule of Section~\ref{sect:index}, we write $\chi_n$ and $\psi_n$ if convenient. To absorb frequent factors of $2$ (which cannot be taken care of otherwise), we follow the convention introduced in \eqref{eq:int} and define
\begin{equation}\label{eq:psiSharp}
\psi^\sharp_n = 2 \psi_n.
\end{equation}
From \eqref{eq:L2ortho} we get the $L^2$-bi-orthonormality
\begin{equation}\label{eq:biortho}
\langle \chi_{n},\psi_{m}\rangle_2 = \llbracket n=m\rrbracket.
\end{equation}
Now, the bi-orthogonal wave functions enjoy a very simple `structure relation' (cf.~\cite{MR2345243}).%\footnote{For the Hermite case, taking \eqref{eq:agamma} into account, this can be found, e.g., in \cite[p.~832]{MR1657844}.}
\begin{theorem}\label{thm:diffchi} With the definition $\psi_{-1}=0$, the bi-orthonormal system of wave functions satisfies, for $n=0,1,2,\ldots$, the differential and integral recursions
\begin{equation}\label{eq:chiRec}
\chi_{n}' = \psi^\sharp_{n-1} - \psi^\sharp_{n+1}, \quad \chi_{n} = \epsilon\psi^\sharp_{n-1} - \epsilon\psi^\sharp_{n+1}.
\end{equation}
\end{theorem}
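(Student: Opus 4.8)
The plan is to establish the differential recursion $\chi_n' = \psi^\sharp_{n-1} - \psi^\sharp_{n+1}$ first and to obtain the integral recursion from it for free. Once the differential identity is known, applying the antiderivative operator $\epsilon$ of \eqref{eq:epsilon} to both sides and using $\epsilon(\chi_n') = \chi_n$ yields the second relation, because the right-hand side $\psi^\sharp_{n-1} - \psi^\sharp_{n+1}$ lies in $\mathcal F$. Here $\epsilon(\chi_n') = \chi_n$ holds unconditionally in the Gaussian case, and in the Laguerre case it holds because the prefactor $x^{(q+1)/2}$ with $q>-1$ carried by $\chi_n$ through \eqref{eq:chipsi}, \eqref{eq:qweights} forces $\chi_n(0)=0$. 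Thus the whole theorem reduces to the differential recursion, which I would prove separately for the Hermite and Laguerre wave functions, as the text announces.

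For the Hermite case I would start from the classical relations $H_n' = 2nH_{n-1}$ together with the three-term recurrence in the form $xH_n = \tfrac12 H_{n+1} + nH_{n-1}$; these combine into the ladder identity for the oscillator wave functions,
\[
\phi_{n,\infty}' = \sqrt{\tfrac n2}\,\phi_{n-1,\infty} - \sqrt{\tfrac{n+1}2}\,\phi_{n+1,\infty}.
\]
Substituting the definitions $\chi_{n,\infty}=\gamma_n\phi_{n,\infty}$ and $\psi^\sharp_{m,\infty}=2\gamma_m^{-1}\phi_{m,\infty}$ from \eqref{eq:chipsi}, \eqref{eq:psiSharp}, the claim $\chi_{n,\infty}' = \psi^\sharp_{n-1,\infty}-\psi^\sharp_{n+1,\infty}$ becomes equivalent to the two scalar relations $\gamma_n\gamma_{n-1}\sqrt{n/2}=2$ and $\gamma_n\gamma_{n+1}\sqrt{(n+1)/2}=2$, which are exactly the functional equation \eqref{eq:agamma}, rewritten as $\gamma_\nu\gamma_{\nu-1}\sqrt{\nu/2}=2$, evaluated at $\nu=n$ and $\nu=n+1$. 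The base case $n=0$ is consistent with the convention $\psi_{-1}=0$ since the coefficient $\sqrt{n/2}$ vanishes there.

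The Laguerre case is the laborious one and is where I expect the real work to lie. Writing $q=p-n$ and collecting the prefactors from \eqref{eq:phi}, \eqref{eq:chipsi}, one has $\chi_{n,p}=c_{n,p}\,x^{(q+1)/2}e^{-x/2}L_n^{(q)}(x)$ for an explicit constant $c_{n,p}$ built from $\gamma_n,\gamma_p$ and $\sqrt{n!/\Gamma(p+1)}$. I would differentiate this product using $\tfrac{d}{dx}L_n^{(q)}=-L_{n-1}^{(q+1)}$, and then eliminate the resulting off-grid pieces with the contiguity relation $xL_{n-1}^{(q+1)}=(n+q)L_{n-1}^{(q)}-nL_n^{(q)}$ and the three-term recurrence $xL_n^{(q)}=(2n+q+1)L_n^{(q)}-(n+1)L_{n+1}^{(q)}-(n+q)L_{n-1}^{(q)}$. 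The crucial point is that the coefficient of $L_n^{(q)}$ cancels identically, leaving
\[
\chi_{n,p}' = c_{n,p}\,x^{(q-1)/2}e^{-x/2}\Big(\tfrac{n+1}{2}L_{n+1}^{(q)} - \tfrac{n+q}{2}L_{n-1}^{(q)}\Big).
\]
Since the consistency rule of Section~\ref{sect:index} sends $\psi_{n-1}$ to $\psi_{n-1,p-1}$ and $\psi_{n+1}$ to $\psi_{n+1,p+1}$, whose Laguerre degree parameter is again $q$, both $\psi^\sharp_{n-1,p-1}$ and $\psi^\sharp_{n+1,p+1}$ are constant multiples of exactly the two functions $x^{(q-1)/2}e^{-x/2}L_{n-1}^{(q)}$ and $x^{(q-1)/2}e^{-x/2}L_{n+1}^{(q)}$ appearing above. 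It then remains to check that the two ratios of normalization constants equal one; after cancelling the factorial and $\Gamma$-quotients (which contribute $\sqrt{(n+1)/(p+1)}$ and $\sqrt{p/n}$ respectively), each collapses to \eqref{eq:agamma} applied simultaneously in the $n$-index and the $p$-index. As before, the vanishing of $L_{-1}^{(q)}$ makes the $n=0$ instance compatible with $\psi_{-1}=0$. The main obstacle is thus not conceptual but the disciplined tracking of the paired index shift $n\to n\pm1$, $p\to p\pm1$ and of the constants $\gamma_\nu$; the functional equation \eqref{eq:agamma} is precisely the identity engineered to make every coefficient reduce to unity.
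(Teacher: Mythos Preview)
Your proposal is correct and follows essentially the same route as the paper: deduce the integral recursion from the differential one via $\epsilon(\chi_n')=\chi_n$ (using $\chi_n(0)=0$ in the Laguerre case), and prove the differential recursion by combining the classical differentiation formula and three-term recurrence for the Hermite/Laguerre polynomials, with the functional equation \eqref{eq:agamma} absorbing all normalization constants. The paper states the resulting structure relations for $e^{-x^2/2}H_n$ and $x^{(q+1)/2}e^{-x/2}L_n^{(q)}$ directly and then just says ``rescaling using \eqref{eq:agamma}'', whereas you spell out the constant-matching explicitly, but the argument is the same.
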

\begin{proof} The second recursion follows from the first one by applying the integral operator $\epsilon$ and observing 
\begin{equation}\label{eq:epsilonchiprime}
\epsilon(\chi_n') = \chi_n.
\end{equation}
In the Hermite case, this is unconditionally true. It follows in the Laguerre case as follows: by spelling out the hidden parameter $q>-1$, we have
\[
\chi_n(x) \propto x^{(q+1)/2} e^{-x/2} L_n^{(q)}(x)
\]
with $(q+1)/2>0$, so that the condition $\chi_n(0)=0$ for \eqref{eq:epsilonchiprime} to be true is satisfied.

In the Hermite case, from the differentiation formula \cite[Eq.~(6.1.11)]{MR1688958} and the three-term recurrence \cite[Eq.~(6.1.10)]{MR1688958} we infer the structure relation
\[
\frac{d}{dx}\left(e^{-x^2/2} H_n(x)\right) = \frac{1}{2} e^{-x^2/2}\left(2n  H_{n-1}(x) - H_{n+1}(x)\right),
\]
which implies the assertion by a rescaling using \eqref{eq:agamma}. 

Similarly, in the Laguerre case, we infer from the differentiation formula \cite[Eq.~(6.2.6)]{MR1688958} and the three-term recurrence \cite[Eq.~(6.2.5)]{MR1688958} the structure relation
\[
\frac{d}{dx}\left(x^{(q+1)/2} e^{-x/2}L_n^{(q)}(x)\right) = - \frac{1}{2}x^{(q-1)/2} e^{-x/2}\left((n+q) L_{n-1}^{(q)}(x) - (n+1)L_{n+1}^{(q)}(x)\right),
\]
which implies the assertion by a further rescaling using \eqref{eq:agamma}.
\end{proof}

\begin{corollary}\label{cor:int} The bi-orthonormal wave functions $\psi_n$ integrate to
\[
\int_\Omega \psi_{n}(x)\,dx = \llbracket\text{\rm$n$ is even}\rrbracket.
\]
\end{corollary}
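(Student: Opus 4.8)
The plan is to reduce the statement to a parity-two recursion for the sequence $c_n := \int_\Omega \psi_n\,dx$, obtained by evaluating the integral recursion of Theorem~\ref{thm:diffchi} at the right endpoint of $\Omega$. The starting observation is that, by the defining formula~\eqref{eq:epsilon}, one has $(\epsilon f)(x) = -\int_x^\infty f(y)\,dy + \tfrac12\int_\Omega f(y)\,dy \to \tfrac12\int_\Omega f(y)\,dy$ as $x\to\infty$ for every $f\in\mathcal F$. With $\psi_n^\sharp = 2\psi_n$ from~\eqref{eq:psiSharp}, this is precisely the constant-of-integration convention~\eqref{eq:int}, giving $\epsilon\psi_n^\sharp(\infty) = \int_\Omega\psi_n\,dx = c_n$.

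Next I would insert $x\to\infty$ into the integral recursion $\chi_n = \epsilon\psi_{n-1}^\sharp - \epsilon\psi_{n+1}^\sharp$. Since $\chi_n\in\mathcal F$ decays exponentially, its value at the right endpoint vanishes, so
\[
0 = \epsilon\psi_{n-1}^\sharp(\infty) - \epsilon\psi_{n+1}^\sharp(\infty) = c_{n-1} - c_{n+1} \qquad (n=0,1,2,\ldots).
\]
Hence $c_{n+1} = c_{n-1}$, so $c_n$ depends only on the parity of $n$. Taking $n=0$ and using $\psi_{-1}=0$ gives $c_1 = c_{-1} = 0$, and induction on the recursion yields $c_{2k+1}=0$ for all $k$.

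It then remains to check the single base value $c_0 = 1$, after which $c_{2k}=1$ follows by the same induction, establishing $c_n = \llbracket n\text{ is even}\rrbracket$. I would verify $c_0=1$ by direct computation from the ground states in~\eqref{eq:chipsi}. In the Hermite case $\psi_{0,\infty} = \gamma_0^{-1}\pi^{-1/4}e^{-x^2/2}$ with $\gamma_0 = \sqrt2\,\pi^{1/4}$, so $\psi_{0,\infty} = (2\pi)^{-1/2}e^{-x^2/2}$ integrates over $\R$ to $1$. In the Laguerre case, spelling out the hidden parameter $q>-1$ gives $\psi_0 = \gamma_0^{-1}\gamma_q^{-1}\sqrt2\,\Gamma(q+1)^{-1/2}x^{(q-1)/2}e^{-x/2}$; since $\int_0^\infty x^{(q-1)/2}e^{-x/2}\,dx = 2^{(q+1)/2}\Gamma(\tfrac{q+1}{2})$, the definition~\eqref{eq:gammaNu} of $\gamma_\nu$ makes the $\gamma_q$ factor cancel this integral exactly, leaving $\gamma_0^{-1}\sqrt2\,\pi^{1/4}=1$.

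The recursion step is essentially forced, so the only substantive point — and the thing to get right — is this base-case computation: confirming that the normalizations $\gamma_\nu$ of~\eqref{eq:gammaNu} are tuned so that $\int_\Omega\psi_0 = 1$ holds simultaneously in the Hermite case and, uniformly in $q$, in the Laguerre case, together with the routine justification that $\chi_n(\infty)=0$ from the exponential decay of the wave functions.
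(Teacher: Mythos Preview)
Your proof is correct and essentially identical to the paper's: both evaluate the integral recursion of Theorem~\ref{thm:diffchi} at $x\to\infty$ (using $\chi_n(\infty)=0$) to obtain the two-step recursion $c_{n+1}=c_{n-1}$, anchor the odd case at $\psi_{-1}=0$, and verify $c_0=1$ by direct computation of the Hermite and Laguerre ground states. The only cosmetic difference is that the paper first simplifies $\psi_0$ in the Laguerre case to $\tfrac{1}{\Gamma((q+1)/2)}\,t^{(q-1)/2}e^{-t}$ (after $x=2t$) and recognizes the Euler integral, whereas you keep the $\gamma_\nu$ factors explicit and cancel them against the integral via~\eqref{eq:gammaNu}.
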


\begin{proof} By \eqref{eq:int}, evaluating the integral recursion in \eqref{eq:chiRec} at $x=\infty$ gives
\[
\int_\Omega \psi_{n}(x)\,dx = \int_\Omega\psi_{n-2}(x)\,dx.
\]
Because of $\psi_{-1}=0$ this evaluates recursively to $0$ if $n$ is odd. If $n$ is even, this evaluates recursively to the integral of $\psi_0$, which is, in the Hermite case 
\[
\int_\Omega \psi_{0}(x)\,dx = \frac{1}{\sqrt{2\pi}} \int_{-\infty}^\infty e^{-x^2/2} \,dx = 1,
\]
and, in the Laguerre case with hidden parameter $q > -1$, 
\[
\int_\Omega \psi_{0}(x)\,dx = \frac{1}{\Gamma((q+1)/2)}\int_0^{\infty} e^{-t} t^{(q+1)/2-1} \,dt = 1,
\]
where the Euler integral is obtained by substituting $x=2t$.
\end{proof}

\begin{remark} Alternatively, the integrals in Corollary~\ref{cor:int} can be established head-on. Using symmetry, the odd case of the Hermite wave functions is clear, and the even case follows from 
\[
\int_{-\infty}^\infty e^{-x^2/2} H_n(x)\,dx = \int_0^\infty e^{-x/2} x^{-1/2} H_n\big(x^{1/2}\big)\,dx = \sqrt{2\pi} \frac{n!}{(n/2)!}\qquad\text{($n$ even)},
\]
which is a Laplace transform \cite[Eq.~(3.25.1.9)]{MR1162979} evaluated at $1/2$.
Similarly, the Laguerre case follows from the integral (with $\alpha>-1$)
\[
\int_0^\infty e^{-x/2} x^{(\alpha-1)/2} L^{(\alpha)}_n(x)\,dx =
\begin{cases}
\dfrac{2^{\frac{\alpha +1}{2}} \Gamma\big(\frac{1}{2} (n+\alpha +1)\big)}{\Gamma\big(\frac{n}{2}+1\big)}  & \text{($n$ even)},\\*[2mm]
0 & \text{($n$ odd)},
\end{cases}
\]
which is also a Laplace transform \cite[Eq.~(3.24.1.10)]{MR1162979} evaluated at $1/2$. Yet another derivation of the last formula, employing the hypergeometric function $_2F_1$, can be found in \cite[Eq.~(A.2)]{MR1313625}.
\end{remark}

\subsection{Skew-orthonormal wave functions}
By construction, keeping the index consistency rule and \eqref{eq:qweights} in mind,
\[
\psi_n/w_1, \quad \chi_n/\sqrt{w_4},
\]
are polynomials of degree $n$. Theorem~\ref{thm:diffchi} shows recursively that also 
\[
\chi'_{n}/w_1, \quad (\epsilon \psi_{2n+1})/\sqrt{w_4}, 
\]
are polynomials of degree $n+1$ and $2n$. The following corollary of Theorem~\ref{thm:diffchi} shows that mixing them according to parity will result in the skew-orthonormal polynomials for $\beta=1,4$.

\begin{corollary}\label{cor:skew} Define the function system $U_0^{(1)},U_1^{(1)},U_2^{(1)},\ldots$\, by the sequence
\[
\psi_0,\chi_0',\psi_2,\chi_2',\psi_4,\chi_4',\ldots\,,
\]
and the function system $U_0^{(4)},U_1^{(4)},U_2^{(4)},\ldots$\, by the sequence
\[
\epsilon\psi_1,\chi_1,\epsilon\psi_3,\chi_3,\epsilon\psi_5,\chi_5,\ldots\,.
\]
The systems are skew-orthonormal for $\beta=1,4$,
\[
\Big(\big\langle U_j^{(\beta)}, U_k^{(\beta)}\big\rangle_\beta\Big)_{j,k=0}^{2n-1} = I_n \otimes 
\begin{pmatrix}
0 & 1\\
-1 & 0
\end{pmatrix}.
\]
Hence, $U_n^{(1)}/w_1$ and $U_n^{(4)}/\sqrt{w_4}$ are the skew-orthonormal polynomials of degree~$n$ for $\beta=1,4$.
\end{corollary}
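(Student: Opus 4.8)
The plan is to verify the asserted block structure entrywise, treating the four parity-types of index pairs separately for each $\beta$, and to reduce everything to the $L^2$-biorthonormality $\langle\chi_n,\psi_m\rangle_2 = \llbracket n=m\rrbracket$ of \eqref{eq:biortho} together with the two recursions of Theorem~\ref{thm:diffchi}. Two preliminary observations drive the computation. First, both skew products are skew-symmetric: $\langle\cdot,\cdot\rangle_4$ by its very definition in \eqref{eq:inner}, and $\langle\cdot,\cdot\rangle_1$ because the kernel $\tfrac12\sgn(x-y)$ of $\epsilon$ is antisymmetric, so that $\langle f,g\rangle_1 = -\langle g,f\rangle_1$ for $f,g\in\mathcal F$. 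Second, rewriting \eqref{eq:chiRec} with $\psi^\sharp = 2\psi$ gives $\chi_n' = 2\psi_{n-1}-2\psi_{n+1}$ and $\epsilon\psi_{n-1}-\epsilon\psi_{n+1} = \tfrac12\chi_n$; telescoping the latter, starting from $\psi_{-1}=0$, yields the closed forms
\[
\epsilon\psi_{2k+1} = -\tfrac12\sum_{i=0}^{k}\chi_{2i}, \qquad \epsilon\psi_{2k} = \epsilon\psi_0 - \tfrac12\sum_{i=0}^{k-1}\chi_{2i+1},
\]
which express the antiderivatives of the $\psi$'s through the $\chi$'s and are the key to the diagonal blocks.

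For $\beta=1$, where $U_{2k}^{(1)}=\psi_{2k}$ and $U_{2k+1}^{(1)}=\chi_{2k}'$, I would compute the even--odd entries as $\langle\psi_{2j},\chi_{2k}'\rangle_1 = \langle\psi_{2j},\epsilon\chi_{2k}'\rangle_2 = \langle\psi_{2j},\chi_{2k}\rangle_2 = \llbracket j=k\rrbracket$, using \eqref{eq:epsilonchiprime}; the odd--even entries then equal $-\llbracket j=k\rrbracket$ by skew-symmetry. The odd--odd entries vanish since $\langle\chi_{2j}',\chi_{2k}'\rangle_1 = \langle\chi_{2j}',\chi_{2k}\rangle_2 = 2\langle\psi_{2j-1}-\psi_{2j+1},\chi_{2k}\rangle_2 = 0$, a pairing of odd- with even-indexed functions. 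For the even--even entries I would insert the telescoped form of $\epsilon\psi_{2k}$ to get $\langle\psi_{2j},\psi_{2k}\rangle_1 = \langle\psi_{2j},\psi_0\rangle_1$ (the $\chi_{2i+1}$-terms drop out against $\psi_{2j}$), and then close the argument by skew-symmetry: $\langle\psi_{2j},\psi_0\rangle_1 = -\langle\psi_0,\psi_{2j}\rangle_1 = -\langle\psi_0,\psi_0\rangle_1 = 0$.

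For $\beta=4$, where $U_{2k}^{(4)}=\epsilon\psi_{2k+1}$ and $U_{2k+1}^{(4)}=\chi_{2k+1}$, the essential simplification is $\langle f,g\rangle_4 = \langle f',g\rangle_2$, valid (by the integration-by-parts remark after \eqref{eq:inner}) because each of $\chi_{2k+1}$ and $\epsilon\psi_{2j+1}$ vanishes at $x=0$ in the Laguerre case --- the latter because $\epsilon\psi_{2j+1}(0) = -\tfrac12\int_\Omega\psi_{2j+1} = 0$ by Corollary~\ref{cor:int} --- and unconditionally in the Gaussian case. With this, the even--odd entries are $\langle\epsilon\psi_{2j+1},\chi_{2k+1}\rangle_4 = \langle\psi_{2j+1},\chi_{2k+1}\rangle_2 = \llbracket j=k\rrbracket$, the odd--even entries are $-\llbracket j=k\rrbracket$ by skew-symmetry, and the odd--odd entries vanish because $\langle\chi_{2j+1},\chi_{2k+1}\rangle_4 = \langle\chi_{2j+1}',\chi_{2k+1}\rangle_2 = 2\langle\psi_{2j}-\psi_{2j+2},\chi_{2k+1}\rangle_2 = 0$. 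For the even--even entries the simplification turns them into $\langle\epsilon\psi_{2j+1},\epsilon\psi_{2k+1}\rangle_4 = \langle\psi_{2j+1},\epsilon\psi_{2k+1}\rangle_2$, into which I would substitute the closed form of $\epsilon\psi_{2k+1}$ above; since $\langle\psi_{2j+1},\chi_{2i}\rangle_2=0$ for all $i$, the whole expression vanishes. Assembling the four entry-types gives $I_n\otimes\left(\begin{smallmatrix}0&1\\-1&0\end{smallmatrix}\right)$ in both cases.

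Finally, the concluding ``hence'' follows because the functions listed just before the corollary are, after division by the appropriate weight, polynomials of exactly the stated degrees: $\psi_{2k}/w_1$ and $\chi_{2k}'/w_1$ have degrees $2k$ and $2k+1$, while $\epsilon\psi_{2k+1}/\sqrt{w_4}$ and $\chi_{2k+1}/\sqrt{w_4}$ have degrees $2k$ and $2k+1$. Thus $U_n^{(\beta)}$ over its weight is a polynomial of degree $n$, and a degree-graded system satisfying the block relation just proved is by definition a system of skew-orthonormal polynomials. The main obstacle I anticipate is precisely the vanishing of the diagonal (even--even and odd--odd) blocks, which --- unlike the off-diagonal blocks --- does not reduce directly to biorthonormality; this is where the telescoped representations of $\epsilon\psi_n$ and the skew-symmetry of the products are indispensable, and where care about the boundary term at $x=0$ in the Laguerre case (ensuring the $\langle\cdot,\cdot\rangle_4 = \langle\cdot',\cdot\rangle_2$ reduction) is required.
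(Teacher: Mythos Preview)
Your proof is correct and follows essentially the same route as the paper's own argument: reduce all entries to $L^2$ pairings via the definitions of $\langle\cdot,\cdot\rangle_1$, $\langle\cdot,\cdot\rangle_4$ and $\epsilon\chi_n'=\chi_n$, then handle the off-diagonal blocks with biorthonormality, the odd--odd blocks with the differential recursion, and the even--even blocks with the integral recursion plus skew-symmetry of $\epsilon$. The only organizational differences are that the paper unifies the two $\beta$-cases into the three common conditions $\langle\psi_n,\epsilon\psi_m\rangle_2=0$, $\langle\chi_n',\chi_m\rangle_2=0$, $\langle\psi_n,\chi_m\rangle_2=\llbracket n=m\rrbracket$ for $n,m$ of equal parity, and runs the integral recursion step-by-step rather than writing out your telescoped closed forms; your explicit check that $\epsilon\psi_{2j+1}(0)=0$ in the Laguerre case is a detail the paper leaves implicit.
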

\begin{proof}
By the skew-symmetry, we have to show that, for $n,m=0,1,2,\ldots$\,, 
\[
\big\langle U_{2n}^{(\beta)}, U_{2m}^{(\beta)}\big\rangle_\beta = \big\langle U_{2n+1}^{(\beta)}, U_{2m+1}^{(\beta)}\big\rangle_\beta = 0,\quad \big\langle U_{2n}^{(\beta)}, U_{2m+1}^{(\beta)}\big\rangle_\beta =\llbracket n=m \rrbracket.
\]
Because of \eqref{eq:inner} and \eqref{eq:epsilonchiprime}, this amounts to showing that
\[
\langle \psi_n, \epsilon \psi_m \rangle_2 = 0, \quad \langle \chi_n', \chi_m \rangle_2 =0, \quad \langle \psi_n, \chi_m \rangle_2 = \llbracket n=m \rrbracket \quad (\text{$n$, $m$ same parity}).
\]
The last equality follows from the bi-orthogonality~\eqref{eq:biortho}, the second one from Theorem~\ref{thm:diffchi} and the bi-orthogonality giving
\[
\langle \chi_n', \chi_m \rangle_2 = 2\langle \psi_{n+1}, \chi_m \rangle_2 -2\langle \psi_{n-1}, \chi_m \rangle_2= 0 \quad (\text{$n$, $m$ same parity}).
\]
For the first one, if $n,m$ are odd, Theorem~\ref{thm:diffchi} and the bi-orthogonality give recursively that 
\[
\langle \psi_n, \epsilon \psi_m \rangle_2 = \langle \psi_n, \epsilon \psi_{m-2} \rangle_2 = \langle \psi_n, \epsilon \psi_{-1} \rangle_2 = 0,
\]
where we have used $\psi_{-1}=0$. If $n,m$ are even, we get similarly
\[
\langle \psi_n, \epsilon \psi_m \rangle_2 = \langle \psi_n, \epsilon \psi_{m-2} \rangle_2 = \langle \psi_n, \epsilon \psi_0 \rangle_2 = -\langle \psi_0, \epsilon \psi_n \rangle_2 = -\langle \psi_0, \epsilon \psi_0 \rangle_2 = 0,
\]
where we have used the skew-symmetry of the operator $\epsilon$.
\end{proof}

\begin{remark}
We calculate with the technique used in the proof that
\begin{equation}\label{eq:plusminusone}
\langle \psi_n, \epsilon\psi^\sharp_{n+1}\rangle_2 = -\llbracket\text{$n$ is even}\rrbracket, \quad \langle \psi_n, \epsilon\psi^\sharp_{n-1}\rangle_2 = \llbracket\text{$n$ is odd}\rrbracket.
\end{equation}
Together with Corollary~\ref{cor:int}, this can be used to check \eqref{eq:rho4} and \eqref{eq:rho1oddeven} against \eqref{eq:mass}.
\end{remark}

\section{Level Densities Written in Terms of Wave Functions}\label{sect:densities}

The basic formulae for the level densities are given at several places; e.g., \cite{MR1762659,MR2641363,MR1190440,Mehta04,MR1142971,MR1657844,MR1675356}. However, revisiting the formulae in terms of the $L^2$-bi-orthonormal system of wave functions introduced in Section~\ref{app:funcSkew} yields significant simplifications. We also show that there is no need to distinguish between the cases of even and odd dimension for the orthogonal ensembles; in fact, the expression of the level density can be brought to a form that holds independently of the parity of $n$. 

For the bi-orthonormal wave functions $\psi_n$ defined in \eqref{eq:chipsi}, we consider the particular antiderivative $\Psi_{n}$ with constant of integration fixed by taking a value $0$ at infinity, that is,
\begin{equation}\label{eq:PsiIntro}
\Psi^\sharp_{n}(x) = - \int_x^\infty \psi^\sharp_{n}(t)\,dt = \epsilon\psi^\sharp_{n}(x) - \llbracket\text{$n$ is even}\rrbracket.
\end{equation}
Here, the second form follows from \eqref{eq:epsilon} and Corollary~\ref{cor:int}.

\subsection{Unitary ensembles} 
As for any determinantal point process (see, e.g., \cite[§4.2]{MR2760897}), the level densities of the unitary ensembles are given as the diagonal of the corresponding correlation kernels. So, using the notation as in \cite[§§2.1,3.1]{B25-1} and Section~\ref{sect:ensembles}, we get
\begin{subequations}
\begin{equation}\label{eq:rho2Kernel}
\rho_{2,n,\infty}(x) = K_n^{\GUE}(x,x) = \sum_{j=0}^{n-1} \phi_{j,\infty}(x)^2,\quad
\rho_{2,n,p}(x) = K_{n,p}^{\LUE}(x,x) = \sum_{j=0}^{n-1} \phi_{j,\hat{\jmath}}(x)^2,
\end{equation}
where $\hat{\jmath}$ satisfies $\hat{\jmath}-j = p-n$. Using the index consistency rule of Section~\ref{sect:index}, we combine both cases into the single expression
\begin{equation}\label{eq:rho2}
\rho_{2,n}(x) = \sum_{j=0}^{n-1} \phi_j(x)^2 = \sum_{j=0}^{n-1} \chi_{j}(x)\psi_{j}(x).
\end{equation}
\end{subequations}

\subsection{Symplectic ensembles}\label{sect:rho4} Using the representation of the $\beta=4$ skew-orthonormal polynomials in Corollary~\ref{cor:skew}, we immediately infer from the work of Tracy and Widom \cite[Eq.~(3.4) and p.~829]{MR1657844} that
\[
2\rho_{4,n} = \sum_{j=0}^{n-1} 
\begin{pmatrix}
U_{2j}^{(4)\prime} & U_{2j+1}^{(4)\prime} 
\end{pmatrix}
\begin{pmatrix}
0 & 1\\*[2mm]
-1 & 0
\end{pmatrix}
\begin{pmatrix}
U_{2j}^{(4)}\\*[2mm]
U_{2j+1}^{(4)} 
\end{pmatrix} = \sum_{j=0}^{n-1} \chi_{2j+1}\psi_{2j+1} -  \sum_{j=0}^{n-1} \chi_{2j+1}' \epsilon\psi_{2j+1}.
\]
If $(a_{ij})$ is the skew-symmetric tridiagonal matrix with $a_{j,j-1}=2$, then by Theorem~\ref{thm:diffchi}
\[
\chi_j' = \sum_{k\geq0} a_{jk}\psi_k, \quad \chi_j = \sum_{k\geq0} a_{jk} \epsilon\psi_k.
\]
Hence, following the argument on \cite[p.~832]{MR1657844}, we obtain
\begin{align*}
\sum_{j=0}^{n-1} \chi_{2j+1}' \epsilon\psi_{2j+1} &= \sum_{\stackrel{j,k\geq 0}{\phantom{\big|}j \text{ odd} \leq 2n-1}} a_{jk}\psi_k \cdot\epsilon\psi_j = - 2\psi_{2n} \epsilon\psi_{2n+1} - \sum_{\stackrel{j,k\geq 0}{\phantom{\big|}k \text{ even} \leq 2n}}  \psi_k \cdot a_{kj} \epsilon\psi_j  \\
&= - 2\psi_{2n} \epsilon\psi_{2n+1} -\sum_{j=0}^n \chi_{2j}\psi_{2j},
\end{align*}
so that we have, by using \eqref{eq:PsiIntro}, \eqref{eq:rho2} and Theorem~\ref{thm:diffchi} once more, the concise expressions (in the Gaussian case, taking \eqref{eq:agamma} into account, the first form can be found in \cite[p.~833]{MR1657844})
\begin{equation}\label{eq:rho4}
\begin{aligned}
2\rho_{4,n} &= \rho_{2,2n+1} +  \psi_{2n} \cdot\epsilon \psi^\sharp_{2n+1} = \rho_{2,2n+1} + \psi_{2n} \cdot\Psi^\sharp_{2n+1}\\*[2mm]
&=\rho_{2,2n} + \psi_{2n} \cdot\epsilon \psi^\sharp_{2n-1}  = \rho_{2,2n} + \psi_{2n} \cdot \Psi^\sharp_{2n-1}.
\end{aligned}
\end{equation}
Alternatively, the latter form follows from rewriting the formulae \cite[Eq.~(5.3/5.11)]{MR2208159} (which spell out the details in \cite[Proposition 4.5]{MR1762659}) in terms of the Hermite and Laguerre wave functions and canceling constants.

\subsection{Orthogonal ensembles} We start with an even dimension.
Using the representation of the $\beta=1$ skew-orthonormal polynomials in Corollary~\ref{cor:skew}, we immediately infer from the work of Tracy and Widom \cite[Eq.~(3.4) and p.~831]{MR1657844} that
\[
\rho_{1,2n} = \sum_{j=0}^{2n-1} 
\begin{pmatrix}
U_{2j}^{(1)} & U_{2j+1}^{(1)} 
\end{pmatrix}
\begin{pmatrix}
0 & 1\\*[2mm]
-1 & 0
\end{pmatrix}
\begin{pmatrix}
\epsilon U_{2j}^{(1)}\\*[2mm]
\epsilon U_{2j+1}^{(1)} 
\end{pmatrix} = \sum_{j=0}^{n-1} \chi_{2j}\psi_{2j} -  \sum_{j=0}^{n-1} \chi_{2j}' \epsilon\psi_{2j}.
\]
With the same calculation as in the symplectic case, we get
\[
\sum_{j=0}^{n-1} \chi_{2j}' \epsilon\psi_{2j} = - 2\psi_{2n-1} \epsilon\psi_{2n} -\sum_{j=0}^{n-1} \chi_{2j+1}\psi_{2j+1},
\]
so that we have, by using \eqref{eq:rho2} and Theorem~\ref{thm:diffchi}, the concise expressions (in the Gaussian case, taking \eqref{eq:agamma} into account, the first form can be found in \cite[p.~834]{MR1657844})
\begin{subequations}\label{eq:rho1oddeven}
\begin{equation}\label{eq:rho1even}
\rho_{1,n} = \rho_{2,n} + \psi_{n-1}\cdot \epsilon\psi^\sharp_{n} = \rho_{2,n-1} + \psi_{n-1}\cdot \epsilon\psi^\sharp_{n-2}\quad \text{($n$ even)}.
\end{equation}
Alternatively, the latter form follows from rewriting the formulae \cite[Eq.~(4.1/5.10)]{MR2208159} (which spell out the details in \cite[Proposition 4.2]{MR1762659}) in terms of the Hermite and Laguerre wave functions and canceling constants.

For an odd dimension, one possibility to derive the corresponding formulae is to spell out the details of \cite[Proposition 4.3]{MR1762659}\footnote{Note that there is a weight factor missing, which is corrected in \cite[Eq.~(6.112)]{MR2641363}.} 
or \cite[§5.7]{Mehta04}. Instead, we proceed by using Corollary~\ref{cor:rho1}, which infers the relation
\[
\rho_{1,2n+1} = 2\rho_{4,n} + \psi_{2n}
\]
as a probabilistic consequence of \eqref{eq:ForresterRains}. Combining this with \eqref{eq:rho4}
yields
\begin{equation}\label{eq:rho1odd}
\rho_{1,n} = \rho_{2,n}+ \psi_{n-1} \cdot \big(1 + \epsilon\psi^\sharp_{n}\big) = \rho_{2,n-1}+ \psi_{n-1} \cdot \big(1 + \epsilon\psi^\sharp_{n-2}\big)  \quad \text{($n$ odd)}.
\end{equation}
\end{subequations}
Even though the formulae for the even cases~\eqref{eq:rho1even} and the odd ones \eqref{eq:rho1odd} look superficially different, they can easily be brought to the same form: by paying attention to \eqref{eq:PsiIntro}, we obtain, for each parity of $n$, 
\begin{equation}\label{eq:rho1}
\rho_{1,n}  = 
\rho_{2,n} + \psi_{n-1}\cdot \big(1  + \Psi^\sharp_{n}\big) = \rho_{2,n-1} + \psi_{n-1}\cdot \big(1  + \Psi^\sharp_{n-2}\big).
\end{equation}

\section{Asymptotic Expansions of the Level Densities}\label{sect:expansions}

\subsection{Gaussian Ensembles}

Here, following \cite{B25-1}, the scaling and expansion parameters are 
\begin{equation}\label{eq:GUEscaling}
\mu_{n,\infty}=\sqrt{2n}, \quad \sigma_{n,\infty} = 2^{-1/2}n^{-1/6}, \quad h_{n,\infty} =\frac{\sigma_n}{2\mu_n}= 4^{-1} n^{-2/3}.
\end{equation}
For $\beta=1,2,4$, we observe $h_{n',\infty} \asymp n^{-2/3}$. As shown in Appendix~\ref{app:ScalingLaw}, these quantities can be understood as a scaling limit $p\to\infty$ of the corresponding quantities defined in \eqref{eq:LUEscaling}.

Using the expansion \cite[Lemma~2.1]{B25-1} of the correlation kernel at the soft edge, we immediately infer from \eqref{eq:rho2Kernel} the following theorem for the Gaussian unitary ensemble.\footnote{The case $m=1$ was previously established with an error of $O(n^{-1})$ in \cite[Eq.~(72)]{MR2178598} and \cite[Eq.~(3.32)]{MR2233711}.}

\begin{theorem}\label{thm:GUE}
There holds, with $m$ being any fixed integer in the range $0\leq m\leq m_*$,\footnote{\label{fn:mstar}For the reasons explained in \cite[Remark~1.1]{B25-1}, we have to impose a bound $m_*$ up to which certain properties have explicitly been verified (we did so up to $m_*=10$); we expect this bound to be insignificant since the Riemann--Hilbert asymptotic analysis of \cite{arXiv:2309.06733} most likely extends to the realm of  \cite[Lemma~2.1]{B25-1}.}
\begin{subequations}\label{eq:GUE}
\begin{equation}
\rho_{2,n,\infty}(x)\frac{dx}{ds}\bigg|_{x=\mu_{n,\infty} + \sigma_{n,\infty} s} = \sum_{j=0}^m \omega_{2,j}(s) h_{n,\infty}^j + h_{n,\infty}^{m+1}\cdot O(e^{-2s}) \quad (n\to\infty),
\end{equation}
uniformly for $s$ bounded from below. Here, the expansion terms $\omega_{2,j}$ take the form 
\begin{equation}
\omega_{2,j} = \tilde p_j \Ai^2 + \tilde q_j \Ai'^2 + \tilde r_j \Ai\Ai',
\end{equation}
\end{subequations}
with polynomial coefficients $\tilde p_j,\tilde q_j,\tilde r_j\in \Q[s]$; the first few are
\[
\begin{gathered}
\tilde p_0(s) = -s,\quad \tilde q_0(s)=1, \quad \tilde r_0(s) = 0,\\*[2mm]
\tilde p_1(s) = -\frac{3s^2}{5}, \quad  \tilde q_1(s) = \frac{2s}{5}, \quad \tilde r_1(s) = \frac{3}{5},\\*[2mm]
\tilde p_2(s) = \frac{39s^3}{175} +\frac{9}{100}, \quad \tilde q_2(s) = -\frac{3s^2}{175}, \quad \tilde r_2(s) = -\frac{s^4}{25}-\frac{99s}{175}.
\end{gathered}
\]
\end{theorem}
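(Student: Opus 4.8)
The plan is to exploit the fact that, by the first relation in \eqref{eq:rho2Kernel}, the level density of the Gaussian unitary ensemble is nothing but the diagonal of the correlation kernel, $\rho_{2,n,\infty}(x) = K_n^{\GUE}(x,x)$. Since the substantive analytic work---the Plancherel--Rotach/Riemann--Hilbert asymptotics at the soft edge---has already been carried out in \cite[Lemma~2.1]{B25-1} in the form of a complete asymptotic expansion of the rescaled kernel, the theorem should follow by specializing that expansion to the diagonal and keeping track of the Jacobian of the change of variables $x = \mu_{n,\infty} + \sigma_{n,\infty}s$.

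First I would import the kernel expansion from \cite[Lemma~2.1]{B25-1}. Written in the soft-edge coordinates of \eqref{eq:GUEscaling}, it furnishes an expansion of the form
\[
\sigma_{n,\infty}\,K_n^{\GUE}(\mu_{n,\infty}+\sigma_{n,\infty}s,\;\mu_{n,\infty}+\sigma_{n,\infty}t) = \sum_{j=0}^m \mathcal K_j(s,t)\,h_{n,\infty}^j + h_{n,\infty}^{m+1}\cdot O(e^{-s-t}),
\]
whose leading term $\mathcal K_0$ is the Airy kernel and whose higher terms $\mathcal K_j$ are built from products of Airy functions and their derivatives with polynomial coefficients. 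Because $dx/ds = \sigma_{n,\infty}$, the left-hand side is exactly $\rho_{2,n,\infty}(x)\,dx/ds$ evaluated on the diagonal, so setting $t=s$ yields the asserted expansion with $\omega_{2,j}(s) = \mathcal K_j(s,s)$. In particular $\omega_{2,0}(s) = K_{\mathrm{Airy}}(s,s) = -s\Ai(s)^2 + \Ai'(s)^2$ after the standard L'Hôpital reduction using the Airy equation, matching \eqref{eq:leadingorder}.

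The one genuinely computational step is to bring each diagonal value $\mathcal K_j(s,s)$ into the stated normal form $\tilde p_j\Ai^2 + \tilde q_j\Ai'^2 + \tilde r_j\Ai\Ai'$. The key tool is the Airy differential equation $\Ai''(s)=s\Ai(s)$, which lets one rewrite any derivative $\Ai^{(k)}$ as a $\Q[s]$-combination of $\Ai$ and $\Ai'$; consequently every polynomial-in-Airy expression collapses to a $\Q[s]$-combination of the three products $\Ai^2,\Ai'^2,\Ai\Ai'$, and this representation is unique since those three are linearly independent over $\C(s)$. Carrying out the reduction order by order produces $\tilde p_j,\tilde q_j,\tilde r_j$, and the first three instances can be read off by direct substitution. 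A mild point to watch is that the subleading coefficients $\mathcal K_j(s,t)$ may carry removable singularities along $s=t$ (inherited from the Christoffel--Darboux form of the Airy kernel); these are handled by a finite Taylor expansion in $t-s$, again using the Airy equation, before setting $t=s$.

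Finally, the error term and the uniformity claim are inherited directly from \cite[Lemma~2.1]{B25-1}: the off-diagonal remainder $O(e^{-s-t})$ specializes on the diagonal to the stated $O(e^{-2s})$, uniformly for $s$ bounded from below. I expect the main obstacle to be organizational rather than analytical---namely, confirming that the coefficients delivered by \cite[Lemma~2.1]{B25-1} indeed possess the polynomial-times-Airy-product structure required for the algebraic reduction, and attending to the removable-singularity bookkeeping on the diagonal---since the genuine asymptotic analysis is already encapsulated in that lemma.
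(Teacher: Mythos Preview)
Your proposal is correct and matches the paper's approach exactly: the theorem is stated as an immediate consequence of \eqref{eq:rho2Kernel} combined with the kernel expansion \cite[Lemma~2.1]{B25-1}, which is precisely your plan of restricting the rescaled kernel expansion to the diagonal and absorbing the Jacobian $dx/ds=\sigma_{n,\infty}$. Your additional remarks on the Airy-equation reduction and the inheritance of the error term are the natural fleshing-out of what the paper leaves implicit.
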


For the Gaussian orthogonal and symplectic ensembles, we have the following theorem.

\begin{theorem}\label{thm:GOEGSE}
Recalling the definition \eqref{eq:nprime} for $n'$,
there holds, with $m$ being any fixed integer in the range $0\leq m\leq m_*$,\footnote{The bound $m_*$ is inherited from Theorem~\ref{thm:GUE}; see Footnote~\ref{fn:mstar} for its insignificance.} 
\begin{subequations}\label{eq:GOEGSE}
\begin{equation}
\begin{aligned}
\rho_{1,n,\infty}(x)\frac{dx}{ds}\bigg|_{x=\mu_{n',\infty} + \sigma_{n',\infty} s} &= \sum_{j=0}^m \omega_{1,j}(s) h_{n',\infty}^j + h_{n',\infty}^{m+1}\cdot O(e^{-2s}),\\*[2mm]
2\rho_{4,n,\infty}(x)\frac{dx}{ds}\bigg|_{x=\mu_{n',\infty} + \sigma_{n',\infty} s} &= \sum_{j=0}^m \omega_{4,j}(s) h_{n',\infty}^j + h_{n',\infty}^{m+1}\cdot O(e^{-2s}),
\end{aligned}\quad (n\to\infty),
\end{equation}
uniformly for $s$ bounded from below. Here, the expansion terms $\omega_{\beta,j}$ take the form 
\begin{equation}
\omega_{\beta,j} =  p_j\Ai^2 +  q_j \Ai'^2 +  r_j \Ai\Ai' +u_j \Ai\AI_\nu +v_j \Ai'\AI_\nu\Big|_{\nu = \llbracket \beta=1\rrbracket},
\end{equation}
\end{subequations}
with polynomial coefficients $p_j, q_j, r_j, u_j, v_j \in \Q[s]$ independent of $\beta$; the first few are
\begin{gather*}
 p_0(s) = -s,\quad  q_0(s)=1, \quad  r_0(s) = 0,\quad u_0(s) = \frac{1}{2},\quad v_0(s) = 0, \quad
 p_1(s) = -\frac{s^2}{2},\\*[2mm]  q_1(s) = \frac{2s}{5}, \quad  r_1(s) = \frac{3}{10}, \quad u_1(s) = -\frac{s}{10}, 
 \quad v_1(s) = \frac{s^2}{10}, \quad
 p_2(s) = \frac{3 s^3}{25}+\frac{279}{700}, \\*[2mm] q_2(s) = -\frac{27s^2}{350}, \quad  r_2(s) = -\frac{s^4}{100}-\frac{27 s}{140},\quad 
 u_2(s) = \frac{s^5}{100}+\frac{9 s^2}{140}, \quad v_2(s) = -\frac{3 s^3}{70}-\frac{9}{70}.
\end{gather*}
\end{theorem}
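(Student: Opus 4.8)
The plan is to treat $\beta=1$ and $\beta=4$ in one stroke, starting from the parity-independent concise formulae \eqref{eq:rho1} and \eqref{eq:rho4}, which I first put into the common shape \eqref{eq:rho14Intro}. Measured against the reference index $\nu=n'$ of \eqref{eq:nprimeDef} --- and using the coincidence \eqref{nprime14} --- both cases exhibit \emph{exactly} the same index pattern: the unitary contribution $\rho_{2,\,\cdot}$ and the primitive $\Psi^\sharp_{\,\cdot}$ carry index $\nu+\tfrac12$, the amplitude $\psi^\sharp_{\,\cdot}$ carries index $\nu-\tfrac12$, and the sole distinction between the two ensembles is the additive constant $\llbracket\beta=1\rrbracket$ in the parenthesis, which (since the limiting primitive is $\AI_0$) selects $\AI_1$ for $\beta=1$ and leaves $\AI_0$ for $\beta=4$, cf.~\eqref{eq:rho4double} and \eqref{eq:PsiIntro}. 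This already explains the $\beta$-independence of $p_j,q_j,r_j,u_j,v_j$ and the duality noted after \eqref{eq:omega}: the computation is identical up to the value $\nu=\llbracket\beta=1\rrbracket\in\{0,1\}$.

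The second step is to insert the available soft-edge expansions at the common scaling $x=\mu_{n',\infty}+\sigma_{n',\infty}s$ of \eqref{eq:GUEscaling}. For the unitary part I invoke Theorem~\ref{thm:GUE} (hence \cite[Lemma~2.1]{B25-1}), and for the two amplitude factors the Plancherel--Rotach type expansions of $\psi^\sharp_n$ and of its primitive $\Psi^\sharp_n$ from \eqref{eq:expanHerPrim}. Because the function indices $\nu\pm\tfrac12$ differ from the scaling index $\nu$ by a half-integer, I must re-expand each factor about the $n'$-scaling: from $\mu_{\nu\pm1/2,\infty}-\mu_{\nu,\infty}=\pm\,\sigma_{\nu,\infty}\,h_{\nu,\infty}^{1/2}\big(1+O(h_{\nu,\infty}^{3/2})\big)$, together with the corresponding ratios $\sigma_{\nu\pm1/2,\infty}/\sigma_{\nu,\infty}$ and $h_{\nu\pm1/2,\infty}/h_{\nu,\infty}$, the scaled variable undergoes a Taylor shift of size $O(h_{n',\infty}^{1/2})$. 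Each factor thereby acquires a full asymptotic series in the \emph{half-integer} powers $h_{n',\infty}^{1/2}$, with coefficients that are polynomials in $s$ times $\{\Ai,\Ai'\}$ for $\psi^\sharp$, times $\{\Ai,\Ai',\AI_\nu\}$ for $\Psi^\sharp$, and times $\{\Ai^2,\Ai'^2,\Ai\Ai'\}$ for $\rho_2$. I then form the product $\tfrac12\psi^\sharp_{\nu-1/2}\cdot\big(\llbracket\beta=1\rrbracket+\Psi^\sharp_{\nu+1/2}\big)$, add $\rho_{2,\nu+1/2}$, and collect powers of $h_{n',\infty}^{1/2}$.

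The decisive step --- and the one I expect to be the main obstacle --- is to show that all half-integer powers cancel, so that a genuine expansion in $h_{n',\infty}$ survives. This is precisely what the half-integer offset in the definition \eqref{eq:nprimeDef} of $n'$ is engineered to achieve: at leading order the $O(h_{n',\infty}^{1/2})=O(n^{-1/3})$ contribution is a pure $s$-derivative of the limit $\omega_{\beta,0}$ in \eqref{eq:leadingorder}, and the shift of the scaling variable induced by the chosen offset annihilates it, in line with the observation recalled in the introduction that the uncentered $O(n^{-1/3})$ term is a derivative of the leading order. Since $\rho_{1,n}$ and $2\rho_{4,n}$ are second-order intensities, I expect this cancellation to persist at every half-integer order; as with Theorem~\ref{thm:GUE}, I would verify it explicitly through order $m_*$, which is the origin of that bound. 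The attendant error control is benign: multiplying the amplitude remainders (each carrying a factor $O(e^{-s})$) and combining with the $O(e^{-2s})$ remainder of $\rho_2$ reproduces the claimed uniform bound $h_{n',\infty}^{m+1}\cdot O(e^{-2s})$ for $s$ bounded from below.

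It remains to cast the surviving integer-power coefficients into the asserted shape. Since $\psi^\sharp$ expands over $\{\Ai,\Ai'\}$ alone while only $\Psi^\sharp$ contributes the antiderivative $\AI_\nu$, every term of the product carries at most one factor $\AI_\nu$; in particular no $\AI_\nu^2$ can appear. Using the Airy equation $\Ai''=s\Ai$ together with $\AI_\nu'=\Ai$, I reduce each monomial to the five-dimensional $\Q[s]$-module spanned by $\{\Ai^2,\Ai'^2,\Ai\Ai',\Ai\AI_\nu,\Ai'\AI_\nu\}$, which is closed under differentiation with polynomial coefficients. Reading off the coefficients yields $p_j,q_j,r_j,u_j,v_j\in\Q[s]$ independent of $\beta$, and the specialization $\nu=\llbracket\beta=1\rrbracket$ confirms that $\omega_{1,j}$ and $\omega_{4,j}$ differ only through the choice of $\AI_1$ versus $\AI_0$, completing the proof.
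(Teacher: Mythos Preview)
Your overall strategy---reduce $\beta=4$ to $\beta=1$ via \eqref{eq:rho14Intro}/\eqref{nprime14}, then expand the three factors in \eqref{eq:rho1} at the $n'$-scaling and collect---is sound, but it diverges from the paper's argument at the crucial point and contains one concrete indexing error.

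\textbf{The indexing error.} You identify the ``function index'' with the subscript and therefore re-expand $\psi^\sharp_{n-1}$ from $\nu-\tfrac12$ and $\Psi^\sharp_n$ from $\nu+\tfrac12$. But by Theorem~\ref{thm:Hermite} the natural scaling index of $\psi^\sharp_k$ (and of $\Psi^\sharp_k$) is $k+\tfrac12$, not $k$. Hence $\psi^\sharp_{n-1}$ is \emph{already} centered at $n-\tfrac12=n'$ (no shift needed), whereas $\Psi^\sharp_n$ is centered at $n+\tfrac12=n'+1$ (a full integer shift, not a half). Your stated Taylor shifts $\mu_{\nu\pm1/2}-\mu_\nu$ are therefore applied to the wrong objects, and the resulting half-integer pattern you describe is not the one that actually arises.

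\textbf{The missing idea.} Even after correcting the indices, your route leaves you with asymmetric shifts ($-\tfrac12$ for $\rho_{2,n}$, $0$ for $\psi_{n-1}$, $-1$ for $\Psi^\sharp_n$), so the cancellation of half-integer powers is not manifest; you fall back on ``verify explicitly through $m_*$''. The paper avoids this entirely by first \emph{symmetrizing}: from the two equivalent forms in \eqref{eq:rho1} it writes
\[
\rho_{1,n}=\tfrac12\big(\rho_{2,n-1}+\rho_{2,n}\big)+\psi_{n-1}\Big(1+\tfrac12\big(\Psi^\sharp_{n-2}+\Psi^\sharp_n\big)\Big).
\]
Now $\rho_{2,n-1}$ and $\rho_{2,n}$ are centered at $n-1$ and $n$, with midpoint $n'$; likewise $\Psi^\sharp_{n-2}$ and $\Psi^\sharp_n$ are centered at $n-\tfrac32$ and $n+\tfrac12$, again with midpoint $n'$; and $\psi_{n-1}$ is already at $n'$. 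Re-expanding each symmetric average about its midpoint, the odd powers of $h_{n'}^{1/2}$ cancel \emph{by symmetry}, with no case-by-case verification required. This is the step you are missing; once you have it, the structural form of $\omega_{\beta,j}$ and the $\beta$-independence of the polynomial coefficients follow exactly as you outline.
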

\begin{proof}
Taking \eqref{eq:rho14Intro} and \eqref{nprime14} into account, the result for $\beta=4$ follows immediately from the one for $\beta=1$. To prove the result for $\beta=1$, we write \eqref{eq:rho1} in the symmetrized form
\[
\rho_{1,n} = \frac{1}{2}\big(\rho_{2,n-1} + \rho_{2,n}\big) + \psi_{n-1} \left(1+ \frac{1}{2}\big(\Psi^\sharp_{n-2} + \Psi^\sharp_{n}\big)\right).
\]
By Theorem~\ref{thm:GUE}, $\rho_{2,n-1}dx/ds$ expands by using the parameters \eqref{eq:GUEscaling} indexed with $n-1$, whereas $\rho_{2,n}dx/ds$ expands by using those indexed with $n$. Thus, re-expanding using the parameters indexed at their center 
\[
n'=n-1/2 = \frac{1}{2}((n-1) +n)
\]
gives an expansion of 
\[
\frac{1}{2}\big(\rho_{2,n-1}(x) + \rho_{2,n}(x)\big)\frac{dx}{ds}\Big|_{x=\mu_{n'}+\sigma_{n'} s}
\]
in powers of $h_{n'}$ since the odd powers of $h_{n'}^{1/2}$ that appear when individually re-expanding $\rho_{2,n-1}(x)dx/ds$ and $\rho_{2,n}(x)dx/ds$  must cancel by symmetry. Similarly, by Theorem~\ref{thm:Hermite}, $\Psi^\sharp_{n-2}$ expands using the parameters indexed by $n-3/2$ and $\Psi^\sharp_{n}$ using those indexed by $n+1/2$. Thus, re-expanding using the parameters indexed at their center 
\[
n'=n-1/2 = \frac{1}{2}((n-3/2) + (n+1/2))
\]
gives an expansion of 
\[
\frac{1}{2}\big(\Psi^\sharp_{n-2} + \Psi^\sharp_{n}\big)\Big|_{x=\mu_{n'}+\sigma_{n'} s}
\]
in powers of $h_{n'}$, since odd powers of $h_{n'}^{1/2}$ must cancel by symmetry. By Theorem~\ref{thm:Hermite}, $\psi_{n-1}dx/ds$ already expands using the parameters indexed by $n'=n-1/2$. Thus, the general structure of the expansion \eqref{eq:GOEGSE} is proven. The specific first polynomials are obtained by a routine calculation using truncated power series.
\end{proof}

\subsection{Laguerre Ensembles}

Here, following \cite{B25-1}, the scaling and expansion parameters are 
\begin{equation}\label{eq:LUEscaling}
\begin{gathered}
\mu_{n,p} = \big(\sqrt{n}+\sqrt{p}\,\big)^2,\quad \sigma_{n,p}= \big(\sqrt{n}+\sqrt{p}\,\big) \bigg(\frac{1}{\sqrt{n}} + \frac{1}{\sqrt{p}}\bigg)^{1/3},\\*[2mm]
\tau_{n,p} = \frac{4}{\big(\sqrt{n}+\sqrt{p}\,\big) \left(\dfrac{1}{\sqrt{n}} + \dfrac{1}{\sqrt{p}}\right)},\quad 
h_{n,p} = \frac{\sigma_{n,p}}{\tau_{n,p}\mu_{n,p}}= \frac{1}{4} \bigg(\frac{1}{\sqrt{n}} + \frac{1}{\sqrt{p}}\bigg)^{4/3}.
\end{gathered}
\end{equation}
For $\beta=1,2,4$, we have $h_{n',p'} \asymp n^{-2/3}$ because of the bound $p>n-1$. We have $0< \tau_{n,p}\leq 1$, where $\tau_{n,p}=1$ corresponds to $n=p$, and $\tau_{n,p}\to 0^+$ as $p\to \infty$.

From the expansion \cite[Lemma~3.1]{B25-1} of the correlation kernel at the soft edge, we immediately get from \eqref{eq:rho2Kernel} the following theorem.\footnote{In a different parametrization, the case $m=1$ was previously studied in \cite[Eq.~(73)]{MR2178598}, \cite[Eq.~(3.34)]{MR2233711} for bounded $\alpha=p-n$, and in \cite[Eq.~(4.3)]{MR3802426} for $\alpha$ proportional to $n$.}

\begin{theorem}\label{thm:LUE}
For integer $n$ and real $p >n-1$, there holds, with~$m$ being any fixed integer in the range $0\leq m\leq m_*$,\footnote{See Footnote~\ref{fn:mstar} about the insignificance of $m_*$.}
\begin{subequations}\label{eq:LUE}
\begin{equation}
\rho_{2,n,p}(x)\frac{dx}{ds}\bigg|_{x=\mu_{n,p} + \sigma_{n,p} s} = \sum_{j=0}^m \omega_{2,j}(s;\tau_{n,p}) h_{n,p}^j + h_{n,p}^{m+1}\cdot O(e^{-2s}) \quad (n\to\infty),
\end{equation}
uniformly for $s$ bounded from below and $\tau_{n,p}$ bounded away from $0$. Here, the terms $\omega_{2,j}$ take the form 
(with the polynomials evaluated in $s,\tau$ and the Airy functions in $s$)
\begin{equation}
\omega_{2,j} = \tilde p_j \Ai^2 + \tilde q_j \Ai'^2 + \tilde r_j \Ai\Ai',\quad \tilde p_j,\tilde q_j,\tilde r_j\in \Q[s,\tau],
\end{equation}
\end{subequations}
with polynomial coefficients $\tilde p_j,\tilde q_j,\tilde r_j\in \Q[s,\tau]$; the first few are
\[
\begin{gathered}
\tilde p_0(s;\tau) = -s,\quad \tilde q_0(s;\tau)=1, \quad \tilde r_0(s;\tau) = 0,\\*[2mm]
\tilde p_1(s;\tau) = \frac{3(2 \tau-1)}{5}s^2, \quad \tilde q_1(s;\tau) = -\frac{2(2 \tau-1)}{5}s, \quad \tilde r_1(s;\tau) = \frac{3-\tau }{5},\\*[2mm]
\tilde p_2(s;\tau) = -\frac{214 \tau ^2-79 \tau -39}{175}s^3+ \frac{ (\tau -3)^2}{100}, \quad \tilde q_2(s;\tau) = \frac{143 \tau ^2-103 \tau -3}{175}s^2, \\*[2mm]
\tilde  r_2(s;\tau) = -\frac{(2 \tau -1)^2}{25}s^4 + \frac{29 \tau ^2-4 \tau -99}{175}s.
\end{gathered}
\]
\end{theorem}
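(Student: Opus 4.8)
The plan is to deduce the expansion directly from the soft-edge kernel expansion of \cite[Lemma~3.1]{B25-1}. By \eqref{eq:rho2Kernel}, the LUE level density is the diagonal of the correlation kernel, $\rho_{2,n,p}(x) = K_{n,p}^{\LUE}(x,x)$, so after multiplying by the Jacobian $dx/ds = \sigma_{n,p}$ the left-hand side of \eqref{eq:LUE} equals the diagonal value of the scaled kernel,
\[
\rho_{2,n,p}(x)\frac{dx}{ds}\Big|_{x = \mu_{n,p}+\sigma_{n,p}s} = \sigma_{n,p}\,K_{n,p}^{\LUE}\big(\mu_{n,p}+\sigma_{n,p}s,\,\mu_{n,p}+\sigma_{n,p}s\big).
\]
I would then invoke \cite[Lemma~3.1]{B25-1}, which expands the scaled kernel $\sigma_{n,p} K_{n,p}^{\LUE}(\mu_{n,p}+\sigma_{n,p}s,\mu_{n,p}+\sigma_{n,p}t)$, up to an error of the form $h_{n,p}^{m+1}O(e^{-(s+t)})$, as a finite sum of powers $h_{n,p}^j$ multiplying bilinear combinations of $\Ai,\Ai'$ evaluated at $s$ and $t$ with coefficients in $\Q[s,t,\tau]$, the leading term being the Airy kernel. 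The essential point is that this expansion is valid uniformly up to and including the diagonal $t=s$, and that it is organized in integer powers of $h_{n,p}$, so that no symmetrization of the kind needed for $\beta=1,4$ in the proof of Theorem~\ref{thm:GOEGSE} is required here.

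Next I would set $t=s$. The leading Airy-kernel term contributes the diagonal value $\Ai'(s)^2 - s\Ai(s)^2$, obtained from the elementary limit $\lim_{t\to s}(\Ai(s)\Ai'(t)-\Ai'(s)\Ai(t))/(s-t) = \Ai'(s)^2 - s\Ai(s)^2$ (using $\Ai''=s\Ai$), reproducing $\omega_{2,0}$ in agreement with \eqref{eq:leadingorder} and fixing $\tilde p_0=-s$, $\tilde q_0=1$, $\tilde r_0=0$. On the diagonal, every higher-order term becomes a $\Q[s,\tau]$-combination of products $\Ai^{(a)}(s)\Ai^{(b)}(s)$. I would reduce each such product to the canonical basis $\{\Ai^2,\Ai'^2,\Ai\Ai'\}$ by repeatedly applying the Airy differential equation $\Ai''(s)=s\Ai(s)$ and its differentiated consequences (e.g.\ $\Ai'''=\Ai+s\Ai'$); the reduction terminates because each substitution lowers the total derivative order. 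This yields $\omega_{2,j} = \tilde p_j\Ai^2+\tilde q_j\Ai'^2+\tilde r_j\Ai\Ai'$ with $\tilde p_j,\tilde q_j,\tilde r_j\in\Q[s,\tau]$, establishing the claimed structural form. Note that no antiderivative terms $\AI_\nu$ arise, since the unitary density is a pure sum of squares of wave functions; these appear only for $\beta=1,4$.

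The uniform error bound $h_{n,p}^{m+1}O(e^{-2s})$ and the ranges of uniformity (in $s$ bounded below and $\tau$ bounded away from $0$) are inherited verbatim from \cite[Lemma~3.1]{B25-1}, as setting $t=s$ in the lemma's error $O(e^{-(s+t)})$ gives $O(e^{-2s})$. The explicit coefficients for $j=0,1,2$ are then produced by a routine bookkeeping computation with truncated power series, exactly as in the proof of Theorem~\ref{thm:GOEGSE}. The main (and essentially only) delicate point is the first one: confirming that the kernel expansion of \cite[Lemma~3.1]{B25-1} restricts cleanly to the diagonal and comes in integer powers of $h_{n,p}$, so that the three-term canonical form emerges without half-integer contributions; once this is granted, everything else is mechanical.
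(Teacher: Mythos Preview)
Your proposal is correct and matches the paper's approach exactly: the paper gives no explicit proof of Theorem~\ref{thm:LUE} but simply states that it follows ``immediately'' from \eqref{eq:rho2Kernel} and the kernel expansion \cite[Lemma~3.1]{B25-1}. Your write-up spells out precisely the mechanical steps implicit in that one-line justification.
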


For the Laguerre orthogonal and symplectic ensembles, we have the following theorem.

\begin{theorem}\label{thm:LOELSE}
Recalling the definition \eqref{eq:nprime} for of $n'$,
there holds, with $m$ being any fixed integer in the range $0\leq m\leq m_*$,\footnote{The bound $m_*$ is inherited from Theorem~\ref{thm:LUE}; see Footnote~\ref{fn:mstar} for its insignificance.} 
\begin{subequations}\label{eq:LOELSE}
\begin{equation}
\begin{aligned}
\rho_{1,n,p}(x)\frac{dx}{ds}\bigg|_{x=\mu_{n',p'} + \sigma_{n',p'} s} &= \sum_{j=0}^m \omega_{1,j}(s;\tau_{n',p'}) h_{n',p'}^j + h_{n',p'}^{m+1}\cdot O(e^{-2s}),\\*[2mm]
2\rho_{4,n,p}(x)\frac{dx}{ds}\bigg|_{x=\mu_{n',p'} + \sigma_{n',p'} s} &= \sum_{j=0}^m \omega_{4,j}(s;\tau_{n',p'}) h_{n',p'}^j + h_{n',p'}^{m+1}\cdot O(e^{-2s}),
\end{aligned}\quad (n\to\infty),
\end{equation}
uniformly for $s$ bounded from below and $\tau_{n',p'}$ bounded away from $0$. Here, the terms $\omega_{\beta,j}$ take the form
(with the polynomials evaluated in $s,\tau$ and the Airy functions in $s$) 
\begin{equation}
\omega_{\beta,j} =  p_j\Ai^2 +  q_j \Ai'^2 +  r_j \Ai\Ai' +u_j \Ai\AI_\nu +v_j \Ai'\AI_\nu\Big|_{\nu = \llbracket \beta=1\rrbracket},
\end{equation}
\end{subequations}
with polynomial coefficients $p_j, q_j, r_j, u_j, v_j \in \Q[s,\tau]$ independent of $\beta$; the first few are
\begin{gather*}
 p_0(s;\tau) = -s,\quad  q_0(s;\tau)=1, \quad  r_0(s;\tau) = 0,\quad u_0(s;\tau) = \frac{1}{2},\quad v_0(s;\tau) = 0,\\*[2mm]
 p_1(s;\tau) =  \frac{2\tau-1}{2}s^2, \quad   q_1(s;\tau) = -\frac{2(2\tau-1)}{5}s, \quad  r_1(s;\tau) = \frac{3-\tau}{10}, \quad u_1(s;\tau) = -\frac{3 \tau + 1}{10}s, \\*[2mm]
 \quad v_1(s;\tau) = -\frac{2\tau-1}{10}s^2, \quad
 p_2(s;\tau) = -\frac{51 \tau ^2-26 \tau -6}{50} s^3 + \frac{37 \tau ^2-372 \tau +558}{1400},\\*[2mm] 
 q_2(s;\tau) =\frac{272 \tau ^2-157 \tau -27}{350} s^2, \quad  r_2(s;\tau) = -\frac{(2 \tau -1)^2}{100}s^4 + \frac{13 \tau ^2-10 \tau -27}{140} s,\\*[2mm]
 u_2(s;\tau) = \frac{(2 \tau -1)^2}{100}s^5 +  \frac{33 \tau ^2+8 \tau +9}{140} s^2,\quad 
 v_2(s;\tau) = \frac{17 \tau ^2-5 \tau -3}{70} s^3+ \frac{\tau ^2+24 \tau -36}{280}.
\end{gather*}
\end{theorem}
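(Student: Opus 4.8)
The plan is to mirror the proof of Theorem~\ref{thm:GOEGSE}, replacing each Gaussian input by its Laguerre counterpart: Theorem~\ref{thm:LUE} in place of Theorem~\ref{thm:GUE}, and the Plancherel--Rotach type expansion of the Laguerre wave functions $\psi^\sharp_{n,p}$ and of their antiderivatives $\Psi^\sharp_{n,p}$ (the Laguerre analogue of Theorem~\ref{thm:Hermite}, cf.~\eqref{eq:expanLagPrim}) in place of the Hermite one. Exactly as there, the case $\beta=4$ follows at once from the case $\beta=1$ through the duality \eqref{eq:rho14Intro} together with \eqref{nprime14}: the two expansions agree up to the index identification $(2n+1)'|_{\beta=1}=n'|_{\beta=4}$ and the replacement of the antiderivative $\AI_1$ by $\AI_0$, which is precisely the bookkeeping parameter $\nu=\llbracket\beta=1\rrbracket$. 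It therefore suffices to treat $\beta=1$.

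For $\beta=1$ I would write the concise formula \eqref{eq:rho1} in the symmetrized form
\[
\rho_{1,n,p} = \tfrac12\big(\rho_{2,n-1,p-1} + \rho_{2,n,p}\big) + \psi_{n-1,p-1}\Big(1 + \tfrac12\big(\Psi^\sharp_{n-2,p-2} + \Psi^\sharp_{n,p}\big)\Big),
\]
reading the single indices of \eqref{eq:rho1} as the index pairs dictated by the consistency rule of Section~\ref{sect:index}, so that each decrement of the first index is matched by a decrement of the second. The purpose of the shifts \eqref{eq:nprimeDef}, extended to $p'=p-\tfrac12$, is that all three constituents are centered at, or symmetric about, the pair $(n',p')=(n-\tfrac12,p-\tfrac12)$: the two densities $\rho_{2,n-1,p-1}$, $\rho_{2,n,p}$ sit at $(n-1,p-1)$ and $(n,p)$; the two antiderivatives $\Psi^\sharp_{n-2,p-2}$, $\Psi^\sharp_{n,p}$ at $(n-\tfrac32,p-\tfrac32)$ and $(n+\tfrac12,p+\tfrac12)$; and $\psi_{n-1,p-1}$ is already centered exactly at $(n',p')$ (this is the Laguerre incarnation of the $(2n+1)^{1/2}$ correction noted in the footnote to Theorem~\ref{thm:GUE}). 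Applying Theorem~\ref{thm:LUE} to each density and the Laguerre wave-function expansion to $\psi_{n-1,p-1}$ and to each $\Psi^\sharp$, and then re-expanding every piece in the centered scale $h_{n',p'}$ with parameter $\tau_{n',p'}$, produces a priori a series in half-integer powers of $h_{n',p'}$.

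The decisive step, and the one I expect to be the main obstacle, is to show that the odd half-integer powers cancel. Here $h_{n',p'}^{1/2}\asymp n^{-1/3}$ is the natural half-step parameter: the displacement $\mu_{n',p'}-\mu_{n-1,p-1}$ of the centering is of the order of $\sigma_{n',p'}h_{n',p'}^{1/2}$, so that the reparametrization $s\mapsto s'$ of the Airy argument contributes precisely at order $h_{n',p'}^{1/2}$, and these contributions are antisymmetric under the diagonal reflection $(n,p)\leftrightarrow(n-1,p-1)$ about $(n',p')$ (respectively $(n+\tfrac12,p+\tfrac12)\leftrightarrow(n-\tfrac32,p-\tfrac32)$). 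They therefore cancel in the two symmetric averages, while the already-centered factor $\psi_{n-1,p-1}$ contributes only integer powers. What must be verified carefully, now with two indices and the auxiliary parameter $\tau$ in play, is that this cancellation is compatible with the $\tau$-dependence: expanding $\mu,\sigma,\tau,h$ as functions of $(\sqrt n,\sqrt p)$ about $(n',p')$ along the diagonal and discarding the odd powers must leave coefficients that are genuinely polynomial in $\tau_{n',p'}$ and $s$, uniformly for $\tau_{n',p'}$ bounded away from $0$, with the remainder staying $h_{n',p'}^{m+1}O(e^{-2s})$ uniformly for $s$ bounded from below. This is the two-parameter generalization of the single-variable symmetry argument used for Theorem~\ref{thm:GOEGSE} and is the technical heart of the proof.

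It remains to collect terms. Reducing every derivative of $\Ai$ through the Airy equation $\Ai''(s)=s\Ai(s)$ expresses the density average $\tfrac12(\rho_{2,n-1,p-1}+\rho_{2,n,p})$ in the basis $\Ai^2,\Ai'^2,\Ai\Ai'$, supplying the coefficients $p_j,q_j,r_j$. The product $\psi_{n-1,p-1}\big(1+\tfrac12(\Psi^\sharp_{n-2,p-2}+\Psi^\sharp_{n,p})\big)$ is linear in the single antiderivative factor $\Psi^\sharp\to\AI_\nu$ (so no power $\AI_\nu^2$ can occur), and its expansion splits into a part quadratic in $\Ai,\Ai'$ --- which augments $p_j,q_j,r_j$ --- and a part carrying one factor $\AI_\nu$ --- which supplies $u_j,v_j$ through the terms $\Ai\AI_\nu$ and $\Ai'\AI_\nu$; the constant $1$ merely shifts $\AI_0$ to $\AI_1$, i.e.\ selects $\nu=\llbracket\beta=1\rrbracket$. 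Since none of these manipulations involves $\beta$ except through $\nu$, the coefficients $p_j,q_j,r_j,u_j,v_j\in\Q[s,\tau]$ are $\beta$-independent, as claimed. The explicit low-order polynomials then follow by a routine computation with truncated power series, exactly as for Theorem~\ref{thm:GOEGSE}.
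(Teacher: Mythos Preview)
Your proposal is correct and follows essentially the same route as the paper: the paper's proof consists of a single sentence stating that, except for invoking Theorem~\ref{thm:LUE} and Theorem~\ref{thm:Laguerre} in place of Theorems~\ref{thm:GUE} and~\ref{thm:Hermite}, the argument is literally that of Theorem~\ref{thm:GOEGSE}. You have spelled out in more detail the two-index symmetry about $(n',p')$ and the compatibility with the $\tau$-dependence that the paper leaves implicit, but the underlying strategy---symmetrize, re-expand at the center, cancel odd half-powers by reflection symmetry along the diagonal---is identical.
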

\begin{proof}
Expect for using Theorems~\ref{thm:LUE} and \ref{thm:Laguerre} instead of Theorems~\ref{thm:GUE} and \ref{thm:Hermite}, the proof is literally the same as for Theorem~\ref{thm:GOEGSE}.
\end{proof}

\begin{remark}\label{rem:GELE}
Comparing the expansion coefficients in Theorems \ref{thm:GUE} and \ref{thm:GOEGSE} with those in Theorems~\ref{thm:LUE}  and \ref{thm:LOELSE}, we note that\footnote{We checked this up to $j=m_*$.}
\[
\begin{gathered}
\tilde p_j(t) = \tilde p_j(t;0), \quad \tilde q_j(t) = \tilde q_j(t;0), \quad \tilde r_j(t) = \tilde r_j(t;0),\\*[2mm]
 p_j(t) =  p_j(t;0), \quad  q_j(t) =  q_j(t;0), \quad  r_j(t) =  r_j(t;0), \quad  u_j(t) =  u_j(t;0), \quad  v_j(t) =  v_j(t;0).
\end{gathered}
\]
This is consistent with the scaling limits \eqref{eq:scalingLaw} as $p\to\infty$. However, since we did not establish the uniformity of the Laguerre expansions for all $0<\tau\leq 1$ (we conjecture this to be true) but only for $\tau$ being bounded away from zero (that is, for $\tau_0 \leq \tau \leq 1$ given any fixed $\tau_0>0$), applying the scaling limits, which are for fixed $n$, falls short of a proof.

\end{remark}

\section{Reconstruction of the Expansions of the Generating Functions}\label{sect:generating}

We continue the discussion started in Section~\ref{sect:GenTheory} and  show that the relation \eqref{eq:omegaAlt} allows us to reconstruct the polynomials \eqref{eq:GenPoly}. Because of Remark~\ref{rem:GELE}, it suffices to consider the Laguerre ensembles; the reconstruction for the Gaussian ensembles follows from setting $\tau=0$. 

The reconstruction mechanism generates three polynomial equations for $\beta=2$ but five polynomial equations for $\beta=1,4$. Hence, for a unique reconstruction, there are just enough equations for the first correction if $\beta=2$, but also for the second one if $\beta=1,4$.

\subsection{Unitary Ensembles} By omitting the indices for $\beta=2$ and $j=1$ in the notation, we get from \eqref{eq:LUE} and \eqref{eq:omegaAlt} the equation (derivatives are w.r.t. the variable $s$)
\[
\tilde p \Ai^2 + \tilde q \Ai'^2 + \tilde r \Ai \Ai' = P_1' \omega_0 + (P_1 + P_2')  \omega_0' + P_2 \omega_0'',
\]
where we infer from the Airy equation $\Ai'' = s \Ai$ that
\[
\omega_0 = \Ai'^2 - s\Ai^2, \quad \omega_0' = -\Ai^2,\quad \omega_0'' = -2 \Ai\Ai'.
\] 
Thus, the equation becomes
\begin{equation}\label{eq:beta2eq1}
\tilde p  \Ai^2 + \tilde q \Ai'^2 + \tilde r \Ai \Ai' = -(P_1 +s P_1' + P_2')\Ai^2 + P_1' \Ai'^2 - 2 P_2 \Ai\Ai'.
\end{equation}
It is shown in many places (see \cite{B25-2} and the literature cited therein) that the functions $\Ai,\Ai'$ are algebraically independent over the field of rational functions $\C(s)$. This means that $\Ai$, $\Ai'$ can be treated as further independent variables, so that \eqref{eq:beta2eq1} becomes a polynomial equation in $\Q[s,\tau,\Ai,\Ai']$, which by comparing coefficients is equivalent to a system of three polynomial equations in $\Q[s,\tau]$:
\[
\frac{3(2 \tau-1)}{5}s^2 = -P_1 -s P_1' - P_2', \quad -\frac{2(2 \tau-1)}{5}s = P_1', \quad \frac{3-\tau }{5} = -2P_2.
\]
Here, we took the concrete polynomials $\tilde p_1$, $\tilde q_1$, $\tilde r_1$ from Theorem~\ref{thm:LUE} on the left. Though looking superficially like a system of ordinary differential equations, it can be solved algebraically in the differential ring $\Q[s,\tau]$: using linear elimination for $P_2, P_1$ in that order -- consistently reinserting the derivatives of previously computed polynomials at each step -- we obtain the unique solution
\[
P_2 = \frac{\tau-3}{10}, \quad P_1 = -\frac{2\tau-1}{5}s^2, 
\] 
which reconstructs the polynomials displayed in \cite[Eq.~(3.7a)]{B25-1}.

\subsection{Orthogonal and Symplectic Ensembles} Because the cases $\beta=1$ and $\beta=4$ share the same polynomial 
coefficients in Theorem~\ref{thm:LOELSE} and in \eqref{eq:genTheory}, it suffices to consider the orthogonal case $\beta=1$.

\subsubsection{First correction terms} By omitting the indices for $\beta=1$ and $j=1$ in the notation, we get from \eqref{eq:LOELSE} and \eqref{eq:omegaAlt} the equation (writing $\AI=\AI_1$ for brevity; derivatives are w.r.t. the variable $s$)
\[
p \Ai^2 + q \Ai'^2 + r \Ai \Ai' + u \Ai \AI + v\Ai'\AI = P_1' \omega_0 + (P_1 + P_2')  \omega_0' + P_2 \omega_0'',
\]
where we infer from the Airy equation $\Ai'' = s \Ai$ and the relation $\AI'=\Ai$ that
\begin{equation}\label{eq:omega0Ders} 
\omega_0 = \Ai'^2 - s\Ai^2 + \frac{1}{2}\Ai\AI, \quad \omega_0' =\frac{1}{2}\big(\Ai'\AI - \Ai^2\big),\quad \omega_0'' = \frac{1}{2} \big(s \Ai \AI - \Ai\Ai'\big).
\end{equation}
Thus, the equation becomes
\begin{multline}\label{eq:beta1eq1}
p \Ai^2 + q \Ai'^2 + r \Ai \Ai' + u \Ai \AI + v\Ai'\AI \\*[2mm]
= -\frac12\big(P_1 + 2sP_1' + P_2'\big)\Ai^2 + P_1' \Ai'^2 - \frac12 P_2 \Ai\Ai' + \frac12\big(P_1' + sP_2\big)\Ai\AI + \frac12 \big(P_1 + P_2'\big)\Ai'\AI.
\end{multline}
By using tools from the Siegel--Shidlovskii theory of transcendental numbers and the differential Galois theory, we give in \cite[Theorem 1]{B25-2} two different proofs that, in fact, the three functions $\Ai,\Ai',\AI$ are algebraically independent over the field of rational functions $\C(s)$. This means that the three functions $\Ai$, $\Ai'$, $\AI$ can be treated as further independent variables, so that \eqref{eq:beta1eq1} becomes a polynomial equation in $\Q[s,\tau,\Ai,\Ai',\AI]$, which by comparing coefficients is equivalent to a system of five polynomial equations in $\Q[s,\tau]$,
\[
\begin{gathered}
\frac{2\tau-1}{2}s^2 = -\frac12\big(P_1 + 2sP_1' + P_2'\big), \quad -\frac{2(2\tau-1)}{5}s=P_1',\quad \frac{3-\tau}{10} = - \frac12 P_2, \\*[2mm]
\quad -\frac{3 \tau + 1}{10}s = \frac12\big(P_1' + sP_2\big),\quad  -\frac{2\tau-1}{10}s^2= 
\frac12 \big(P_1 + P_2'\big).
\end{gathered}
\]
Here, we took the concrete polynomials $p_1,q_1,r_1,u_1,v_1$ from Theorem~\ref{thm:LOELSE} on the left. By solving with linear elimination for $P_2, P_1$ in that order -- consistently reinserting the derivatives of previously computed polynomials at each step -- we obtain the unique solution
\[
P_2 =  \frac{\tau-3}{5}, \quad P_1 = -\frac{2\tau-1}{5}s^2,
\] 
which reconstructs the polynomials displayed in \cite[Eq.~(6.2a)]{B25-1}.

\subsubsection{Second correction terms} We proceed as before. By omitting the indices for $\beta=1$ and $j=2$ in the notation, we get from \eqref{eq:LOELSE} and \eqref{eq:omegaAlt} the equation (derivatives are w.r.t. the variable $s$)
\begin{multline*}
p \Ai^2 + q \Ai'^2 + r \Ai \Ai' + u \Ai \AI + v\Ai'\AI \\*[0mm]
= P_1' \omega_0 + (P_1 + P_2')  \omega_0' + (P_2 + P_3')  \omega_0'' + (P_3+ P_4')  \omega_0''' + P_4 \omega_0^{(4)},
\end{multline*}
where we have in addition to \eqref{eq:omega0Ders} that
\[
\omega_0''' =\frac{1}{2}\big(\Ai\AI+s\Ai'\AI-\Ai'^2\big),\quad \omega_0^{(4)} = \frac{1}{2} \big(\Ai^2-s \Ai \Ai'+s^2 \Ai \AI+ 2 \Ai'\AI \big).
\] 
Once again, comparing coefficients of the resulting polynomial equation in $\Q[s,\tau,\Ai,\Ai',\AI]$ gives a system of five polynomial equations in $\Q[s,\tau]$,
\[
\begin{aligned}
-\frac{51 \tau ^2-26 \tau -6}{50} s^3 + \frac{37 \tau ^2-372 \tau +558}{1400}&= -\frac12\big(P_1+2sP_1'+P_2'-P_4\big),\\*[2mm] \frac{272 \tau ^2-157 \tau -27}{350} s^2 &=\frac12\big(2P_1' -P_3 -P_4'\big),\\*[2mm] -\frac{(2 \tau -1)^2}{100}s^4 + \frac{13 \tau ^2-10 \tau -27}{140} s &= -\frac12\big(P_2 + P_3' +sP_4\big),\\*[2mm]
\frac{(2 \tau -1)^2}{100}s^5 +  \frac{33 \tau ^2+8 \tau +9}{140} s^2 &= \frac12\big(P_1' +s P_2 + P_3 + s P_3' + s^2 P_4 + P_4'\big),\\*[2mm] \frac{17 \tau ^2-5 \tau -3}{70} s^3+ \frac{\tau ^2+24 \tau -36}{280}&= \frac12\big(P_1 + P_2' + s  P_3 + 2P_4 + sP_4'\big).
\end{aligned}
\]
Here, we took the concrete polynomials $p_2,q_2,r_2,u_2,v_2$ from Theorem~\ref{thm:LOELSE} on the left. By solving with linear elimination for $P_4, P_3, P_2, P_1$ in that order -- consistently reinserting the derivatives of previously computed polynomials at each step -- we obtain the unique solution
\[
\begin{gathered}
P_4 = \frac{(\tau -3)^2}{50},\quad 
P_3 = -\frac{(\tau -3) (2 \tau -1)}{25} s^2,\quad
P_2 = \frac{(2 \tau -1)^2}{50} s^4 -\frac{2(4 \tau ^2+26 \tau -39)}{175} s,\\*[2mm]
P_1 = \frac{43 \tau ^2-18 \tau -8}{175} s^3+\frac{9 \tau ^2+496 \tau -744}{700},
\end{gathered}
\]
which reconstructs the polynomials displayed in \cite[Eq.~(6.2b)]{B25-1}.

Since only the final form \eqref{eq:genTheory} of the result concerning the generating function is assumed here -- without relying on any specific details of the general theory -- the unique reconstruction of the polynomial coefficients provides further compelling evidence supporting the conjectures underlying the cases $\beta = 1,4$ in the general theory presented in \cite{B25-3}, \cite{B25-1}.

\appendix

\section{Probabilistic Content of the Wave Functions}\label{app:probwave}

For the orthogonal ensembles, using the index consistency rule established in Section~\ref{sect:index}, we can rewrite the joint probability density \eqref{eq:joint} in terms of the 
function system \eqref{eq:chipsi} as
\begin{equation}\label{eq:pGOE}
P_{1,n}(x_1,\ldots,x_n) = \frac{1}{n!} \left|\det_{j,k=1}^n \psi_{j-1}(x_k)\right|.
\end{equation}
Here, the factor $1/n!$ comes from rescaling the Selberg integrals; see, e.g., \cite[Eqs.~(2.5.10/11)]{MR2760897}.

\begin{theorem}\label{thm:E} The system \eqref{eq:chipsi} of wave functions can be represented in the form  
\[
\frac{d}{dx} \E\bigg(\frac{1}{2}\sgn(x-x_1)\cdots  \sgn(x-x_n)\bigg) = 
\psi_{n-1}(x),
\]
where $x_1,\ldots,x_n$ are the levels of the orthogonal ensemble.
\end{theorem}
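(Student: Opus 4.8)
The plan is to start from the explicit joint density \eqref{eq:pGOE} and differentiate under the expectation. Writing $G_n(x)=\E\big(\tfrac12\prod_{k=1}^n\sgn(x-x_k)\big)$ and using $\tfrac{d}{dx}\sgn(x-x_k)=2\delta(x-x_k)$ together with the exchangeability of the levels, I would get
\[
G_n'(x)=\E\Big(\sum_{k=1}^n\delta(x-x_k)\prod_{l\neq k}\sgn(x-x_l)\Big)=n\,\E\Big(\delta(x-x_1)\prod_{l=2}^n\sgn(x-x_l)\Big),
\]
which by \eqref{eq:pGOE} equals $\tfrac{1}{(n-1)!}\int_{\Omega^{n-1}}\prod_{l=2}^n\sgn(x-x_l)\,\big|\det_{j,k=1}^n\psi_{j-1}(x_k)\big|_{x_1=x}\,dx_2\cdots dx_n$.

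The key observation that removes the awkward absolute value is a sign cancellation. Since $\psi_i=w_1\cdot(\text{polynomial of degree }i)$, one has $\det_{j,k=1}^n\psi_{j-1}(x_k)=c\prod_k w_1(x_k)\cdot\Delta(x_1,\ldots,x_n)$, where $c>0$ is the product of the leading coefficients of $\psi_i/w_1$ (positive by $\gamma_\nu>0$ and the normalizations \eqref{eq:phi}). Fixing $x_1=x$ and restricting to the ordered region $x_2<\cdots<x_n$, the sign of the determinant is $\sgn\Delta(x,x_2,\ldots,x_n)=\prod_{l=2}^n\sgn(x_l-x)=(-1)^{n-1}\prod_{l=2}^n\sgn(x-x_l)$, so that $\prod_{l=2}^n\sgn(x-x_l)\cdot|\det|=(-1)^{n-1}\det$ on that region. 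Using the symmetry of the integrand in $x_2,\ldots,x_n$ to pass to the ordered region (a factor $(n-1)!$), I obtain
\[
G_n'(x)=(-1)^{n-1}\int_{x_2<\cdots<x_n}\det_{j,k=1}^n\psi_{j-1}(x_k)\Big|_{x_1=x}\,dx_2\cdots dx_n.
\]

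Finally I would expand the determinant along the column $x_1=x$, giving $G_n'(x)=(-1)^{n-1}\sum_{j=1}^n(-1)^{j-1}\psi_{j-1}(x)\,c_j$ with $c_j=\int_{x_2<\cdots<x_n}\det[\psi_i(x_k)]_{i\in\{0,\ldots,n-1\}\setminus\{j-1\}}$. Each $c_j$ is an ordered integral of a determinant of $n-1$ of the wave functions, which by de Bruijn's integration formula is a (bordered) Pfaffian assembled from the skew inner products $\langle\psi_i,\psi_{i'}\rangle_1=\langle\psi_i,\epsilon\psi_{i'}\rangle_2$ and the integrals $\int_\Omega\psi_i=\llbracket i\text{ even}\rrbracket$ of Corollary~\ref{cor:int}. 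The telescoping technique behind Corollary~\ref{cor:skew} and \eqref{eq:plusminusone} shows this skew form to be banded—$\langle\psi_i,\epsilon\psi_{i'}\rangle_2=-\tfrac12$ when $i<i'$ with $i$ even and $i'-i$ odd, and $0$ otherwise—and a Pfaffian computation with this banded matrix should give $c_j=0$ for every $j\le n-1$ while $c_n=1$. Substituting leaves $G_n'(x)=(-1)^{n-1}(-1)^{n-1}\psi_{n-1}(x)=\psi_{n-1}(x)$.

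The main obstacle is this last step: proving that deleting any interior wave function $\psi_{j-1}$ (with $j\le n-1$) annihilates the ordered integral, while deleting the top one leaves exactly $\psi_{n-1}$ with coefficient one. This is where the structural input (skew-orthonormality, the banded form, and Corollary~\ref{cor:int}) must be combined, and the sign and normalization bookkeeping—the fact that $c>0$ and that the two factors $(-1)^{n-1}$ cancel—has to be tracked carefully. I would first confirm the mechanism for small $n$ (e.g. $n=1,2,3$, where $c_n$ reduces to $\int_\Omega\psi_0=1$ or $\langle\psi_0,\epsilon\psi_1\rangle_2=-\tfrac12$) before treating the general banded Pfaffian.
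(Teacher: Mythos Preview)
Your route is the paper's: order the integral (or, as you do, differentiate via $\delta$'s and then order), arrive at $(-1)^{n-1}\int_{x_2<\cdots<x_n}\det[\psi_{j-1}(x_k)]|_{x_1=x}$, expand along the column at $x$, and apply de~Bruijn to the cofactors $c_j$. The coefficient $c_n$ needs no Pfaffian at all: by your own sign argument and \eqref{eq:pGOE} it is $\int_{x_2<\cdots<x_n}\det[\psi_i(x_k)]_{i=0}^{n-2}=\int P_{1,n-1}=1$, which is how the paper isolates the $\psi_{n-1}(x)$ term.

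Where you stall---and where your description goes astray---is the vanishing of $c_j$ for $j\leq n-1$. The skew matrix is \emph{not} banded: your own formula gives $\langle\psi_i,\epsilon\psi_{i'}\rangle_2=-\tfrac12$ for \emph{every} pair with $i$ even and $i'>i$ odd, a full checkerboard above the diagonal, so a direct Pfaffian evaluation is unpleasant. The paper bypasses this entirely with a one-line observation drawn from the structure relation of Theorem~\ref{thm:diffchi}: since $\chi_j=\epsilon\psi_{j-1}^\sharp-\epsilon\psi_{j+1}^\sharp$, rows $j{-}1$ and $j{+}1$ of the de~Bruijn matrix differ only in column $j$ (bi-orthogonality gives $\langle\chi_j,\psi_l\rangle_2=\llbracket j=l\rrbracket$, and $\chi_j(\infty)=0$ handles the border). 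After deleting row and column $j$ these two rows coincide, so $\Pf(A_j)=0$ immediately; the boundary case $j=0$ uses $\psi_{-1}=0$ to make row~$1$ itself vanish. This replaces your proposed brute-force computation with a single application of the integral recursion you already cite.
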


\begin{proof} Writing $F_n(x)$ for the expected value at hand, we get from \eqref{eq:pGOE} by symmetry
\[
F_n(x) = \frac{1}{2}\underset{x_1 < \cdots < x_n}{\int \cdots \int} \det_{j,k=1}^n \Big(\sgn(x-x_k)\psi_{j-1}(x_k)\Big)\,dx_1\cdots dx_n.
\]
By spotting here the operator $\epsilon$ as defined in \eqref{eq:epsilon} and using $(\epsilon f)'=f$, we calculate
\begin{align*}
F_n'(x) &= 
\sum_{j=1}^n \underset{x_1 < \cdots < x_{j-1} < x < x_{j+1} < \cdots < x_n}{\int \cdots \int}  \\*[0mm]
&\qquad{\small\begin{vmatrix}
\sgn(x-x_1) \psi_0(x_1) & \cdots &  \psi_0(x) &\cdots& \sgn(x-x_n) \psi_0(x_n) \\*[2mm]
\vdots & & \vdots & & \vdots \\*[2mm]
\sgn(x-x_1) \psi_{n-1}(x_1) & \cdots &  \psi_{n-1}(x) &\cdots& \sgn(x-x_n) \psi_{n-1}(x_n)
\end{vmatrix}}
\,dx_1 \cdots \widehat{dx_j} \cdots dx_n\\*[2mm]
&=  \sum_{j=1}^n \underset{x_1 < \cdots < x_{j-1} < x < x_{j+1} < \cdots < x_n}{\int \cdots \int} (-1)^{j-1} (-1)^{n-j}\\*[0mm]
&\qquad{\small\begin{vmatrix}
\psi_0(x) & \psi_0(x_1) & \cdots &  \widehat{\psi_0(x_j)} &\cdots& \psi_0(x_n) \\*[2mm]
\vdots &\vdots & & \vdots & & \vdots \\*[2mm]
\psi_{n-1}(x) & \psi_{n-1}(x_1) & \cdots &  \widehat{\psi_{n-1}(x_j)} &\cdots& \psi_{n-1}(x_n) 
\end{vmatrix}}
\,dx_1 \cdots \widehat{dx_j} \cdots dx_n\\*[2mm]
&= (-1)^{n-1} \underset{x_1 < \cdots < x_{n-1}}{\int \cdots \int}
{\small\begin{vmatrix}
\psi_0(x) & \psi_0(x_1) & \cdots & \psi_0(x_{n-1}) \\*[2mm]
\vdots &\vdots & \vdots  & \vdots \\*[2mm]
\psi_{n-1}(x) & \psi_{n-1}(x_1) &\cdots& \psi_{n-1}(x_{n-1}) 
\end{vmatrix}}
\,dx_1 \cdots  dx_{n-1}\\
\intertext{}
&=  \psi_{n-1}(x) \\*[0mm]
&\qquad +(-1)^{n-1}\sum_{j=0}^{n-2} (-1)^{j} \psi_j(x)\underset{x_1 < \cdots < x_{n-1}}{\int \cdots \int}
{\small\begin{vmatrix}
\psi_0(x_1) & \cdots & \psi_0(x_{n-1}) \\*[2mm]
\vdots &  & \vdots \\*[2mm]
\widehat{\psi_j(x_1)} & \cdots & \widehat{\psi_j(x_{n-1})} \\*[2mm]
\vdots &  & \vdots \\*[2mm]
\psi_{n-1}(x_1) & \cdots & \psi_{n-1}(x_{n-1})  
\end{vmatrix}}
\,dx_1 \cdots  dx_{n-1},
\end{align*}
where hats indicate missing objects.

According to an integration formula of de Bruijn \cite[p.~138]{MR79647}, the latter multiple integrals evaluate as Pfaffians,
\[
\underset{x_1 < \cdots < x_{n-1}}{\int \cdots \int}
{\small\begin{vmatrix}
\psi_0(x_1) & \cdots & \psi_0(x_{n-1}) \\*[2mm]
\vdots &  & \vdots \\*[2mm]
\widehat{\psi_j(x_1)} & \cdots & \widehat{\psi_j(x_{n-1})} \\*[2mm]
\vdots &  & \vdots \\*[2mm]
\psi_{n-1}(x_1) & \cdots & \psi_{n-1}(x_{n-1})  
\end{vmatrix}}
\,dx_1 \cdots  dx_{n-1} = \Pf(A_j) \quad (j=0,\ldots,n-2),
\]
where $A_j$ is obtained by deleting row $j$ and column $j$ from the following matrix $A=(a_{kl})$. If $n$ is odd, we have
(cf.~\cite[Eq.~(4.3)]{MR79647})
\[
a_{kl} = \langle \epsilon \psi^\sharp_k,\psi_l\rangle_2\quad (k,l=0,\ldots,n-1);
\]
whereas if $n$ is even, this skew-symmetric matrix gets bordered by (cf.~\cite[Eq.~(4.4)]{MR79647}, \eqref{eq:int})
\[
a_{k,n} = -a_{n,k} = \int_\Omega \psi_k(y)\,dy = \epsilon\psi^\sharp_k(\infty), \quad a_{nn} = 0.
\]
Theorem~\ref{thm:diffchi}, the bi-orthogonality \eqref{eq:biortho} and $\chi_l(\infty)=0$ imply that the rows in $A_j$ that come just from before and after the deleted one in $A$ are, in fact, equal. Hence, $\Pf(A_j)=0$ and we can thus infer
\[
F_n'(x) = \psi_{n-1}(x)
\]
as asserted.
\end{proof}

\begin{remark}
By observing  
\[
\int_\Omega\frac{d}{dx}\frac{1}{2}\E\big(\sgn(x-x_1)\cdots  \sgn(x-x_n)\big)\, dx = \frac{1-(-1)^n}{2} = \llbracket\text{$n$ is odd}\rrbracket,
\]
we see that Theorem~\ref{thm:E} implies Corollary~\ref{cor:int}.
\end{remark}

\begin{corollary}\label{cor:rho1} The level densities of the orthogonal and symplectic ensembles are related by
\begin{equation}\label{eq:rho1rho4}
\rho_{1,2n+1} = 2\rho_{4,n} + \psi_{2n}.
\end{equation}
\end{corollary}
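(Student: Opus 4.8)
The plan is to read the relation off the Forrester--Rains interrelation \eqref{eq:ForresterRains} at the level of the number distributions, and then to identify the resulting parity correction with $\psi_{2n}$ by invoking Theorem~\ref{thm:E}.

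First I would set up the probabilistic coupling. Write $N_1(x)$ for the number of levels of the orthogonal ensemble of dimension $2n+1$ that are $\leq x$, and $N_4(x)$ for the corresponding count of the symplectic ensemble of dimension $n$. The map $\even(\cdot)$ in \eqref{eq:ForresterRains} identifies, in distribution, the $k$-th ordered symplectic level with the $2k$-th ordered orthogonal level. Hence $N_4(x) = \lfloor N_1(x)/2\rfloor$, so that
\[
N_1(x) = 2N_4(x) + \llbracket N_1(x)\text{ is odd}\rrbracket .
\]
Taking expectations and differentiating in $x$ (the interchange being justified by the smoothness and decay encoded in the class $\mathcal F$), and recalling $\rho = \bar N'$ from \eqref{eq:density}, gives
\[
\rho_{1,2n+1}(x) = 2\rho_{4,n}(x) + \frac{d}{dx}\,\prob\big(N_1(x)\text{ is odd}\big).
\]
It therefore remains to show that the last derivative equals $\psi_{2n}(x)$.

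For this I would rewrite the parity probability as a sign-product expectation. Almost surely there are no ties, so $(-1)^{N_1(x)} = \prod_{j=1}^{2n+1}\sgn(x_j - x)$, and consequently
\[
\prob\big(N_1(x)\text{ is odd}\big) = \E\Big[\tfrac12\big(1-(-1)^{N_1(x)}\big)\Big] = \tfrac12 + \E\Big(\tfrac12\sgn(x-x_1)\cdots\sgn(x-x_{2n+1})\Big),
\]
where flipping the argument in each factor contributes an overall $(-1)^{2n+1}=-1$. Differentiating and applying Theorem~\ref{thm:E} to the orthogonal ensemble of dimension $2n+1$ — so that the right-hand side is $\psi_{(2n+1)-1}=\psi_{2n}$ — yields exactly $\psi_{2n}(x)$, which completes the proof.

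I expect the main obstacle to be the careful bookkeeping in the coupling step: one must correctly translate ``every second level'' into the floor relation $N_4 = \lfloor N_1/2\rfloor$, and then track parity together with the sign conventions — in particular the odd number $2n+1$ of factors that supplies the crucial sign flip — so that the correction term lands on $+\psi_{2n}$ rather than $-\psi_{2n}$. The remaining analytic points (differentiation under the expectation, the almost-sure absence of ties) are routine given the function class $\mathcal F$, and the argument applies uniformly to the Gaussian and Laguerre cases, since \eqref{eq:ForresterRains} covers both.
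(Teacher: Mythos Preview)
Your proof is correct and follows essentially the same route as the paper: you invoke the Forrester--Rains coupling \eqref{eq:ForresterRains} to obtain $N_1=2N_4+\llbracket N_1\text{ odd}\rrbracket$, rewrite the parity indicator as a sign product, take expectations and differentiate, and then apply Theorem~\ref{thm:E} to identify the correction as $\psi_{2n}$. The only cosmetic difference is that the paper states the parity identity directly as $\llbracket N_1\text{ odd}\rrbracket=\tfrac12\big(1+\prod_j\sgn(x-x_j)\big)$, whereas you route it through $(-1)^{N_1}=\prod_j\sgn(x_j-x)$ and the sign flip from the odd number of factors; the content is identical.
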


\begin{proof} Using the index consistency rule of Section~\ref{sect:index}, we get from the Forrester--Rains interrelations \eqref{eq:ForresterRains} that the corresponding number distributions, as defined in \eqref{eq:density}, satisfy
\[
N_{1,2n+1}(x) = 2N_{4,n}(x) + \llbracket \text{$N_{1,2n+1}(x)$ is odd}\rrbracket.
\]
With $x_1,\ldots,x_{2n+1}$ denoting the levels of the orthogonal ensemble, we have
\[
\llbracket \text{$N_{1,2n+1}(x)$ is odd}\rrbracket = \frac{1}{2}\big(1 + \sgn(x-x_1)\cdots\sgn(x-x_{2n+1})\big).
\]
Taking expectations and then the derivative with respect to $x$ yields
\[
\rho_{1,2n+1}(x) = 2 \rho_{4,n}(x) + \frac{1}{2}\frac{d}{dx}\E\big(\sgn(x-x_1)\cdots\sgn(x-x_{2n+1})\big).
\]
Applying Theorem~\ref{thm:E} gives the assertion.
\end{proof}

\section{Asymptotic Expansions of the Wave Functions}\label{eq:PlancherelRotach}

The Poincaré-type expansion  of the ratio of two Gamma functions (see, e.g., \cite[Eq.~(C.4.3)]{MR1688958}) implies at once that
\begin{equation}\label{eq:gammaExpand}
\gamma_\nu \sim 2^{3/4} \nu^{-1/4} \left( 1-\frac{\nu^{-1}}{8} +\frac{\nu^{-2}}{128} + \frac{21\nu^{-3}}{1024} +\cdots \right) \quad (\nu\to\infty).
\end{equation}
With this at hand, the expansions of the Hermite and Laguerre functions $\phi_{n,\infty}$, $\phi_{n,p}$ given in \cite[§§10.1/10.3]{B25-1} can be straightforwardly re-expanded to establish those of $\psi_{n,\infty}$, $\psi_{n,p}$. 

\begin{theorem}\label{thm:Hermite}
With the parameters $\mu,\sigma,h$ denoting the corresponding quantities of \eqref{eq:GUEscaling} indexed at $n+1/2$, there holds, for any fixed integer $m$ as $n\to\infty$,
\begin{equation}\label{eq:expanHer2}
\left. \psi^\sharp_{n,\infty}(x) \frac{dx}{ds}\right|_{x=\mu+\sigma s}=   \Ai(s) + \sum_{k=1}^m \left({\mathfrak p}_k(s)\Ai(s) + {\mathfrak q}_k(s)\Ai'(s)\right)h^k + h^{m+1} O(e^{-s}),
\end{equation}
uniformly  for $s$ bounded from below, with certain ${\mathfrak p}_k, {\mathfrak q}_k \in \Q[s]$; the first of which are
\[
{\mathfrak p}_1(s) = -\frac{s}{5}, \quad {\mathfrak q}_1(s) = \frac{s^2}{5},\quad
{\mathfrak p}_2(s) = \frac{s^5}{50}+\frac{9 s^2}{70}, \quad {\mathfrak q}_2(s) = -\frac{3 s^3}{35}-\frac{9}{35}.
\]
If we denote the antiderivatives of $\psi^\sharp_{n,\infty}$ and $\Ai$ that vanish at $x=\infty$ by $\Psi^\sharp_{n,\infty}$ and $\AI_0$, the asymptotic expansion integrates to
\begin{equation}\label{eq:expanHerPrim}
\Psi^\sharp_{n,\infty}(x)\Big|_{x=\mu+\sigma s} =   \AI_0(s) + \sum_{k=1}^m \left({\mathfrak P}_k(s)\Ai(s) +  {\mathfrak Q}_k(s)\Ai'(s)\right)h^k + h^{m+1} O(e^{-s}),
\end{equation}
with certain ${\mathfrak P}_k, {\mathfrak Q}_k \in \Q[s]$; the first of which are
\[
{\mathfrak P}_1(s) = \frac{s^2}{5}, \quad  {\mathfrak Q}_1(s) = -\frac{3}{5},\quad 
{\mathfrak P}_2(s) = -\frac{29 s^3}{175}-\frac{309}{350},\quad  {\mathfrak Q}_2(s) = \frac{s^4}{50}+\frac{219 s}{350}.
\]
\end{theorem}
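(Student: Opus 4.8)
The plan is to obtain \eqref{eq:expanHer2} from the Plancherel--Rotach type expansion of the oscillator wave function $\phi_{n,\infty}$ already established in \cite[§10.1]{B25-1}, and then to integrate it for \eqref{eq:expanHerPrim}. By \eqref{eq:chipsi} and \eqref{eq:psiSharp} we have the elementary identity $\psi^\sharp_{n,\infty}=2\gamma_n^{-1}\phi_{n,\infty}$, so the only analytic input beyond \cite[§10.1]{B25-1} is the asymptotics \eqref{eq:gammaExpand} of the scalar prefactor $2\gamma_n^{-1}$. The reason for indexing the scaling parameters at $n+\tfrac12$ is structural: the $n$-th Hermite function has its classical turning point at $\sqrt{2n+1}=\mu_{n+1/2,\infty}$, so this centering is exactly what aligns the soft edge of $\phi_{n,\infty}$ with the Airy variable $s$ and yields the clean leading term $\Ai(s)$ with corrections organized in integer powers of $h$; this is the same mechanism exploited in the proof of Theorem~\ref{thm:GOEGSE}.

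First I would express the expansion of \cite[§10.1]{B25-1} in the parameters indexed at $n+\tfrac12$, absorbing the index change and the scalar series \eqref{eq:gammaExpand} by Taylor expanding the explicit $n$-dependence of $\mu,\sigma,h$. Substituting these relations, Taylor expanding the Airy arguments, and reducing every higher derivative $\Ai^{(j)}$ back to the basis $\{\Ai,\Ai'\}$ by repeated use of $\Ai''=s\Ai$ then yields \eqref{eq:expanHer2} with coefficients $\mathfrak p_k,\mathfrak q_k\in\Q[s]$; the displayed low-order coefficients follow by a routine truncated-power-series computation. The uniform error $h^{m+1}O(e^{-s})$ is inherited from the corresponding bound in \cite[§10.1]{B25-1}, since multiplying by the bounded scalar $2\gamma_n^{-1}$ and re-expanding in $h$ preserve the exponential decay as $s\to+\infty$.

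For the antiderivative \eqref{eq:expanHerPrim} I would integrate \eqref{eq:expanHer2} in $s$, noting that $\psi^\sharp_{n,\infty}(x)\,dx/ds$ equals $\tfrac{d}{ds}\Psi^\sharp_{n,\infty}(\mu+\sigma s)$ and that every term vanishes as $s\to+\infty$, so that no constant of integration survives and the error integrates to the same order. The leading term gives $\AI_0$ by definition. The point worth isolating is that each correction $\int(\mathfrak p_k\Ai+\mathfrak q_k\Ai')\,ds$ can be written purely as $\mathfrak P_k\Ai+\mathfrak Q_k\Ai'$, with no $\AI_0$ appearing: differentiating this ansatz and using $\Ai''=s\Ai$ gives the linear system $\mathfrak P_k'+s\mathfrak Q_k=\mathfrak p_k$ and $\mathfrak P_k+\mathfrak Q_k'=\mathfrak q_k$, which reduces to the inhomogeneous Airy equation $-\mathfrak Q_k''+s\mathfrak Q_k=\mathfrak p_k-\mathfrak q_k'$. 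Since $Q\mapsto-Q''+sQ$ raises the degree of a polynomial by one and has no nonzero polynomial kernel, this admits a unique polynomial solution $\mathfrak Q_k$, and then $\mathfrak P_k=\mathfrak q_k-\mathfrak Q_k'$; this is precisely why the antiderivative retains the two-function Airy form rather than acquiring an $\AI_0$ term at each order.

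The main obstacle is the bookkeeping in the re-expansion step: correctly propagating the half-integer index shift through both the Airy arguments and the prefactors while confirming that the half-integer powers of $h$ cancel, and systematically collapsing higher Airy derivatives onto $\{\Ai,\Ai'\}$, all with the error estimate kept uniform for $s$ bounded from below. This part is intricate but mechanical; the only genuinely conceptual step is the integration identity above.
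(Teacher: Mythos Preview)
Your derivation of \eqref{eq:expanHer2} is essentially the paper's argument: multiply the expansion of $\phi_{n,\infty}$ from \cite[§10.1]{B25-1} by the scalar series for $2\gamma_n^{-1}$ coming from \eqref{eq:gammaExpand}. (In fact \cite[Corollary~10.1]{B25-1} is already stated with the parameters indexed at $n+\tfrac12$, so no re-indexing or cancellation of half-integer powers of $h$ is needed; your ``main obstacle'' largely evaporates.)

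The integration step, however, has a genuine gap. Your claim that $-\mathfrak Q_k''+s\,\mathfrak Q_k=\mathfrak p_k-\mathfrak q_k'$ always admits a polynomial solution because $Q\mapsto -Q''+sQ$ is injective on $\Q[s]$ is not valid: this operator raises degree by one, so it maps $\Q[s]_{\le d-1}$ injectively into $\Q[s]_{\le d}$, and its image has codimension one there. Concretely, writing $\mathfrak p_k-\mathfrak q_k'=\sum_i c_i s^i$ and solving downward for the coefficients of $\mathfrak Q_k$, all of them are determined by the equations for $i\ge 1$, and the $i=0$ equation then imposes the single compatibility condition $c_0+2c_3+40c_6+\cdots=0$. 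Equivalently, if one rewrites $\mathfrak p_k\Ai+\mathfrak q_k\Ai'$ as a finite constant-coefficient combination $\sum_j a_j\Ai^{(j)}$ via the rule \eqref{eq:airyrule}, your ansatz works precisely when $a_0=0$; otherwise an $\AI_0$ term is forced. This is exactly what the paper's proof supplies: it converts each correction into that derivative form and then invokes the construction underlying \cite[Corollary~10.1]{B25-1} to assert that $\Ai$ (the $j=0$ term) occurs only in the leading order. With $a_0=0$ secured, the termwise antiderivative is immediate and lands in $\Q[s]\Ai+\Q[s]\Ai'$. So your approach to \eqref{eq:expanHerPrim} is salvageable, but you must either verify the codimension-one condition directly for the $\mathfrak p_k,\mathfrak q_k$ at hand or, as the paper does, import it from the structure of the expansion in \cite{B25-1}.
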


\begin{proof} From \cite[Corollary~10.1]{B25-1}, by observing $\sigma=h^{1/4}$ and re-expanding \eqref{eq:gammaExpand} as
\begin{equation}\label{gamman_as_h}
\gamma_n^{-1} \sim \frac{1}{2\sqrt{2} h^{3/8}} \left(1 + 2 h^3 + \cdots\right),
\end{equation}
we see that the expansion \eqref{eq:expanHer2} results from multiplying the expansion \cite[(10.11)]{B25-1} with the bracket in \eqref{gamman_as_h}. In particular, only the polynomials with index $k\geq 3$ in the original expansion \cite[Eq.~(10.12)]{B25-1} are getting transformed. Since the reminder term is integrable, 
\[
\Psi^\sharp_{n,\infty}(\mu +\sigma s) = -\sigma \int_{s}^\infty \psi^\sharp_{n,\infty}(\mu+\sigma t)\,dt
\]
is expanded by integrating \eqref{eq:expanHer2} termwise. To accomplish that, we proceed as follows. The Airy differential equation $s \Ai(s) = \Ai''(s)$ implies the replacement rule
\begin{equation}\label{eq:airyrule}
s^j \Ai^{(k)}(s) = s^{j-1} \Ai^{(k+2)}(s) - k s^{j-1} \Ai^{(k-1)}(s) \qquad (j\geq 1, \; k\geq 0)
\end{equation}
which, if repeated, allows us to absorb any polynomial prefactors into higher-order derivatives of $\Ai$. Hence, the right-hand-side of \eqref{eq:expanHer2} becomes
{\small\[
\Ai(s) +  \left(\frac{\Ai^{(5)}(s)}{5} -\Ai''(s)\right)h +\left(\frac{\Ai^{(10)}(s)}{50} - \frac{17\Ai^{(7)}(s)}{35} +\frac{5\Ai^{(4)}(s)}{2} -2 \Ai'(s)\right)h^2 + \cdots,
\]}%
where that $\Ai(s)$ does only appear in the leading term (this follows from going through the construction underlying the proof of \cite[Corollary~10.1]{B25-1}). The resulting expansion is now easily integrated to \eqref{eq:expanHerPrim}.
\end{proof}

\begin{theorem}\label{thm:Laguerre}
With the parameters $\mu,\sigma,h,\tau$ denoting the corresponding quantities of \eqref{eq:LUEscaling} indexed at $n+1/2, p+1/2$,
there holds, for any fixed non-negative integer $m$ as $n\to\infty$,
\begin{equation}\label{eq:expanLag4}
\left.\psi^\sharp_{n,p}(x) \frac{dx}{ds}\right|_{x=\mu+\sigma s} =   \Ai(s) + \sum_{k=1}^m \left({\mathfrak p}_k(s;\tau)\Ai(s) + {\mathfrak q}_k(s;\tau)\Ai'(s)\right)h^k + h^{m+1} O(e^{-s}),
\end{equation}
uniformly for $s$ bounded from below and $\tau$ bounded away from $0$. The coefficients are certain polynomials ${\mathfrak p}_k, {\mathfrak q}_k \in \Q[s,\tau]$; the first few are
\[
\begin{gathered}
{\mathfrak p}_1(\tau,s) = -\frac{3\tau +1}{5}s, \quad
{\mathfrak q}_1(\tau,s) = -\frac{2 \tau -1}{5}s^2,\\*[1mm]
{\mathfrak p}_2(\tau,s) = \frac{(2 \tau -1)^2 }{50}s^5 + \frac{33 \tau ^2+8 \tau +9}{70}s^2, \quad
{\mathfrak q}_2(\tau,s) = \frac{17 \tau ^2-5 \tau -3}{35}s^3 + \frac{\tau ^2+24 \tau -36}{140}.
\end{gathered}
\]
If we denote the antiderivatives of $\psi^\sharp_{n,p}$ and $\Ai$ that vanish at $x=\infty$ by $\Psi^\sharp_{n,p}$ and $\AI_0$, the asymptotic expansion integrates to
\begin{equation}\label{eq:expanLagPrim}
\Psi^\sharp_{n,p}(x)\Big|_{x=\mu+\sigma s} =   \AI_0(s) + \sum_{k=1}^m \left({\mathfrak P}_k(s;\tau)\Ai(s) + {\mathfrak Q}_k(s;\tau)\Ai'(s)\right)h^k + h^{m+1} O(e^{-s}),
\end{equation}
with certain polynomial coefficients ${\mathfrak P}_k, {\mathfrak Q}_k \in \Q[s,\tau]$; the first few are
\[
\begin{gathered}
{\mathfrak P}_1(\tau,s) = -\frac{2 \tau -1}{5}s^2, \quad
{\mathfrak Q}_1(\tau,s) = \frac{\tau -3}{5}, \\*[1mm]
{\mathfrak P}_2(\tau,s) = \frac{29 \tau ^2+31 \tau -29}{175}s^3 + \frac{23 \tau ^2+412 \tau -618}{700}, \\*[1mm]
{\mathfrak Q}_2(\tau,s) = \frac{(2 \tau -1)^2 }{50}s^4 -\frac{9 \tau ^2+146 \tau -219}{350}s.
\end{gathered}
\]
\end{theorem}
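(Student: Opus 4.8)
The plan is to mirror the proof of Theorem~\ref{thm:Hermite}, replacing the single normalization factor $\gamma_n^{-1}$ of the Hermite case by the combined prefactor that relates $\psi^\sharp_{n,p}$ to the Laguerre function $\phi_{n,p}$. Combining \eqref{eq:chipsi} with \eqref{eq:psiSharp} gives
\[
\psi^\sharp_{n,p}(x) = 2\gamma_n^{-1}\gamma_p^{-1}(x/2)^{-1/2}\phi_{n,p}(x),
\]
so that, writing $x=\mu+\sigma s$, the expansion of $\psi^\sharp_{n,p}(x)\,dx/ds$ is obtained by multiplying the Plancherel--Rotach type expansion of $\phi_{n,p}(x)\,dx/ds$ from \cite[§10.3]{B25-1} (the Laguerre analogue of \cite[Corollary~10.1]{B25-1}, a combination of $\Ai(s)$ and $\Ai'(s)$ with coefficients in $\Q[s,\tau]$) by the scalar factor $2\gamma_n^{-1}\gamma_p^{-1}(x/2)^{-1/2}$. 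First I would expand this scalar factor as a power series in $h$ with coefficients in $\Q[s,\tau]$.

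The two gamma factors are expanded by \eqref{eq:gammaExpand}, while the algebraic factor is expanded by the binomial series after observing that, from \eqref{eq:LUEscaling}, $\sigma/\mu=\tau h$, whence $(x/2)^{-1/2}=(\mu/2)^{-1/2}(1+\tau s\,h)^{-1/2}$; this is where the $\tau$-dependence of the coefficients enters. Just as in the Hermite computation \eqref{gamman_as_h}, the decisive point is that the scaling quantities are indexed at $n+\tfrac12,p+\tfrac12$ while the gamma factors carry the unshifted indices $n,p$; re-expressing the unshifted $n^{-1},p^{-1}$ through the shifted $h,\tau$ — conveniently via their symmetric combinations $n^{-1}+p^{-1}$ and $n^{-1}p^{-1}$ — causes the apparent fractional powers of $h$ to cancel, leaving an integer-power series. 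Multiplying this prefactor series by the $\phi_{n,p}$-expansion and collecting the coefficients of $\Ai(s)$ and $\Ai'(s)$ then produces \eqref{eq:expanLag4}; the uniform $h^{m+1}O(e^{-s})$ remainder is inherited from that of the $\phi_{n,p}$-expansion, which accounts for the restriction to $s$ bounded from below and $\tau$ bounded away from $0$.

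For the antiderivative, I would integrate \eqref{eq:expanLag4} termwise — legitimate since the remainder is integrable — using
\[
\Psi^\sharp_{n,p}(\mu+\sigma s) = -\int_s^\infty \left.\psi^\sharp_{n,p}(x)\frac{dx}{dt}\right|_{x=\mu+\sigma t}dt.
\]
Exactly as in the Hermite proof, I would first apply the replacement rule \eqref{eq:airyrule} repeatedly to absorb every polynomial prefactor into higher-order derivatives $\Ai^{(k)}(s)$, so that $\Ai(s)$ itself survives only in the leading term; termwise integration then uses $\int_s^\infty\Ai^{(k)}(t)\,dt=-\Ai^{(k-1)}(s)$ for $k\ge1$ and $\int_s^\infty\Ai(t)\,dt=-\AI_0(s)$, turning the leading $\Ai(s)$ into $\AI_0(s)$ and yielding \eqref{eq:expanLagPrim}.

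The main obstacle is the bookkeeping in the second paragraph: verifying that the fractional powers of $h$ produced by the three non-integer-order factors $\gamma_n^{-1},\gamma_p^{-1},(x/2)^{-1/2}$ cancel against one another under the $n+\tfrac12,p+\tfrac12$ indexing, so that the prefactor is a genuine integer-power series in $h$ over $\Q[s,\tau]$. Once this structural fact is in hand, everything else parallels the Hermite case, and the explicit first polynomials $\mathfrak p_k,\mathfrak q_k,\mathfrak P_k,\mathfrak Q_k$ follow from a routine truncated power-series calculation.
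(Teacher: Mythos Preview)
Your proposal is correct and follows essentially the same route as the paper's proof. The paper dispatches your ``main obstacle'' by computing the two pieces separately, namely $\gamma_n^{-1}\gamma_p^{-1} \sim \tfrac14\, h^{-3/4}\tau^{-1/2}\bigl(1+\tfrac{\tau^2-8\tau+8}{4}h^3+\cdots\bigr)$ and $\bigl((\mu+\sigma s)/2\bigr)^{-1/2} = \tau\, h^{3/4}\bigl(1-\tfrac12\tau s\,h+\cdots\bigr)$, so the fractional prefactors $h^{\pm 3/4}$ cancel on the nose and each bracket is already an integer power series in $h$; no delicate re-expansion through the shifted indices is needed.
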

\begin{proof} From \cite[Corollary~10.3]{B25-1}, by observing $\sigma=2\tau^{-1}h^{-1/2}$, re-expanding \eqref{eq:gammaExpand}
as 
\begin{equation}\label{eq:gammaprod}
\gamma_n^{-1}\gamma_p^{-1} \sim \frac{1}{4} h^{-3/4} \tau^{-1/2} \Big(1+\frac{\tau ^2-8 \tau +8}{4}h^3 + \cdots\Big),
\end{equation}
and expanding
\begin{equation}\label{eq:sqrtexpand}
\left(\frac{\mu+\sigma s}{2}\right)^{-1/2} = \tau h^{3/4} \Big(1- \frac{1}{2} \tau s h + \frac{3}{8}(\tau s h)^2  - \frac{5}{16}   (\tau s  h)^3 + \cdots\Big)
\end{equation}
we see that the expansion \eqref{eq:expanLag4} results from multiplying the expansion \cite[(10.28)]{B25-1} with the brackets in \eqref{eq:gammaprod} and \eqref{eq:sqrtexpand}. The antiderivative is dealt with as in Theorem~\ref{thm:Hermite}.
\end{proof}

\begin{remark}
If we compare the expansions of the Hermite and Laguerre functions in Theorems \ref{thm:Hermite} and \ref{thm:Laguerre}, we observe the following simple relations between the corresponding polynomial coefficients:\footnote{We checked this up to $j=m_*$.}
\[
{\mathfrak p}_j(t) = {\mathfrak p}_j(t;0),\quad {\mathfrak q}_j(t) = {\mathfrak q}_j(t;0), \quad {\mathfrak P}_j(t) = {\mathfrak P}_j(t;0), \quad {\mathfrak Q}_j(t) = {\mathfrak Q}_j(t;0).
\]
This is consistent with the scaling limits \eqref{eq:scalingLaw} as $p\to\infty$. However, since we did not establish the uniformity of the Laguerre expansions for all $0<\tau\leq 1$ (we conjecture this to be true) but only for $\tau$ being bounded away from zero (that is, for $\tau_0 \leq \tau \leq 1$ given any fixed $\tau_0>0$), applying the scaling limits, which are for fixed $n$, falls short of a proof. 

\end{remark}

\section{The Laguerre-to-Hermite Scaling Limit}\label{app:ScalingLaw}

Using the scaling and expansion parameters \eqref{eq:GUEscaling} and \eqref{eq:LUEscaling}, we easily get from \eqref{eq:gammaExpand} that
\[
\lim_{p\to\infty}\frac{\sigma_{n,p}^{1/2}}{\gamma_p} \left(\frac{\mu_{n,p}+\sigma_{n,p} s}{2}\right)^{-1/2} = \sigma_{n,\infty}^{1/2},\quad \lim_{p\to\infty} h_{n,p}=h_{n,\infty},\quad \lim_{p\to\infty} \tau_{n,p} = 0.
\]
In \cite[Remark~10.5]{B25-1}, we proved that
\[
\lim_{p\to\infty} \sigma_{n,p}^{1/2} \phi_{n,p} (\mu_{n,p} + \sigma_{n,p} s) = \sigma_{n,\infty}^{1/2} \phi_{n,\infty} (\mu_{n,\infty} + \sigma_{n,\infty} s).
\]
This implies the following scaling limits for the bi-orthogonal system of Laguerre functions, as introduced in Section~\ref{app:funcSkew}:
\begin{subequations}\label{eq:scalingLaw}
\begin{equation}\label{eq:chipsiScalingLaw}
\begin{aligned}
\lim_{p\to\infty}  \psi_{n,p} (x)\, dx \Big|_{x=\mu_{n,p} + \sigma_{n,p} s} &= \psi_{n,\infty} (x)\,dx\Big|_{x=\mu_{n,\infty} + \sigma_{n,\infty} s},\\*[2mm]
\lim_{p\to\infty}  \chi_{n,p} (x) \Big|_{x=\mu_{n,p} + \sigma_{n,p} s} &= \chi_{n,\infty} (x) \Big|_{x=\mu_{n,\infty} + \sigma_{n,\infty} s};
\end{aligned}
\end{equation}
where we note that the $\psi_{n,p}$ transform as densities and the $\chi_{n,p}$ as functions. 
The limits hold uniformly for $s$ bounded from below and can be differentiated w.r.t. $s$. Hence, integration and differentiation give
\begin{equation}\label{eq:chipsiScalingLaw2}
\begin{aligned}
\lim_{p\to\infty}  \epsilon\psi_{n,p} (x) \Big|_{x=\mu_{n,p} + \sigma_{n,p} s} &= \epsilon \psi_{n,\infty} (x)\Big|_{x=\mu_{n,\infty} + \sigma_{n,\infty} s},\\*[2mm]
\lim_{p\to\infty}  \chi_{n,p}' (x)\,dx \Big|_{x=\mu_{n,p} + \sigma_{n,p} s} &= \chi_{n,\infty}' (x)\,dx \Big|_{x=\mu_{n,\infty} + \sigma_{n,\infty} s}.
\end{aligned}
\end{equation}
An application of \eqref{eq:rho2}, \eqref{eq:rho4} implies
\begin{equation}\label{eq:chipsiScalingLaw3}
\lim_{p\to\infty}  \rho_{\beta,n,p} (x)\,dx \Big|_{x=\mu_{n,p} + \sigma_{n,p} s} = \rho_{\beta,n,\infty} (x)\,dx \Big|_{x=\mu_{n,\infty} + \sigma_{n,\infty} s} \quad (\beta=1,2).
\end{equation}
\end{subequations}
By Corollary~\ref{cor:rho1}, the scaling limit of the level density $\rho_{4,n,p}$ takes the same form except that we have to replace $n,p$ with $2n,2p$  in the scaling parameters $\mu$ and $\sigma$. This, and \eqref{eq:chipsiScalingLaw3}, follows also from the general $\LbE_{n,p} \to \GbE_{n}$ transition law\footnote{For the $\LOE$ with integer $p$ this transition law  is implied by the multivariate central limit theorem: namely, if a matrix $X_{n,p}$ is drawn from the standard $n$-variate Wishart distribution with $p$ degrees of freedom, one gets in distribution (cf. \cite[Theorem 3.4.4]{MR1990662})
\[
\frac{X_{n,p}-pI}{\sqrt{2p}} \to \GOE_n \quad (p\to \infty).
\]
By continuity, the limit can be lifted to the eigenvalue distributions; see, e.g., \cite[pp.~125--126]{MR145620}, \cite[Corollary~13.3.2]{MR1990662}, and \cite[Corollary~9.5.7]{MR652932}.} \cite[Theorem~B.1]{B25-1}, \cite[Theorem~1(ii)]{MR3413957} by integration.

\let\oldaddcontentsline\addcontentsline% Store \addcontentsline
\renewcommand{\addcontentsline}[3]{}%

%\subsection*{Acknowledgements} 
%The author thanks Peter Forrester for suggesting to study the asymp\-totic expansions of level densities.

\let\addcontentsline\oldaddcontentsline
\addtocontents{toc}{\vspace{1\baselineskip}}
\bibliographystyle{spmpsci}
\bibliography{paper}

\begin{thebibliography}{10}
\providecommand{\url}[1]{{#1}}
\providecommand{\urlprefix}{URL }
\expandafter\ifx\csname urlstyle\endcsname\relax
  \providecommand{\doi}[1]{DOI~\discretionary{}{}{}#1}\else
  \providecommand{\doi}{DOI~\discretionary{}{}{}\begingroup
  \urlstyle{rm}\Url}\fi

\bibitem{MR1762659}
Adler, M., Forrester, P.J., Nagao, T., van Moerbeke, P.: Classical skew
  orthogonal polynomials and random matrices.
\newblock J. Statist. Phys. \textbf{99}(1-2), 141--170 (2000)

\bibitem{MR2760897}
Anderson, G.W., Guionnet, A., Zeitouni, O.: An Introduction to Random Matrices.
\newblock Cambridge Univ. Press, Cambridge (2010)

\bibitem{MR145620}
Anderson, T.W.: Asymptotic theory for principal component analysis.
\newblock Ann. Math. Statist. \textbf{34}, 122--148 (1963)

\bibitem{MR1990662}
Anderson, T.W.: {An Introduction to Multivariate Statistical Analysis}, 3rd
  edn.
\newblock John Wiley \& Sons, New York (2003)

\bibitem{MR1688958}
Andrews, G.E., Askey, R., Roy, R.: Special Functions.
\newblock Cambridge University Press, Cambridge (1999)

\bibitem{B25-1}
Bornemann, F.: Asymptotic expansions of the limit laws of {G}aussian and
  {L}aguerre ({W}ishart) ensembles at the soft edge (2024).
\newblock \urlprefix\url{https://arxiv.org/abs/2403.07628}

\bibitem{B25-2}
Bornemann, F.: Algebraic independence of an {Airy} function, its derivative,
  and antiderivative (2025).
\newblock \urlprefix\url{https://arxiv.org/abs/2502.11852}

\bibitem{B25-3}
Bornemann, F.: Asymptotic expansions of {Gaussian} and {Laguerre} ensembles at
  the soft edge {III}: {G}enerating functions (2025).
\newblock \urlprefix\url{https://arxiv.org/abs/2506.18673}

\bibitem{MR79647}
de~Bruijn, N.G.: On some multiple integrals involving determinants.
\newblock J. Indian Math. Soc. (N.S.) \textbf{19}, 133--151 (1955)

\bibitem{MR4712577}
Byun, S.S., Lee, Y.W.: Finite size corrections for real eigenvalues of the
  elliptic {G}inibre matrices.
\newblock Random Matrices Theory Appl. \textbf{13}(1), Paper No. 2450005, 36pp
  (2024)

\bibitem{MR2233711}
Choup, L.N.: Edgeworth expansion of the largest eigenvalue distribution
  function of {GUE} and {LUE}.
\newblock Int. Math. Res. Not. \textbf{2006}, Art. ID 61049, 32pp (2006)

\bibitem{MR2294977}
El~Karoui, N.: A rate of convergence result for the largest eigenvalue of
  complex white {W}ishart matrices.
\newblock Ann. Probab. \textbf{34}(6), 2077--2117 (2006)

\bibitem{MR1236195}
Forrester, P.J.: The spectrum edge of random matrix ensembles.
\newblock Nuclear Phys. B \textbf{402}(3), 709--728 (1993)

\bibitem{MR2461989}
Forrester, P.J.: A random matrix decimation procedure relating
  {$\beta=2/(r+1)$} to {$\beta=2(r+1)$}.
\newblock Comm. Math. Phys. \textbf{285}(2), 653--672 (2009)

\bibitem{MR2641363}
Forrester, P.J.: Log-Gases and Random Matrices.
\newblock Princeton Univ. Press, Princeton (2010)

\bibitem{MR2208159}
Forrester, P.J., Frankel, N.E., Garoni, T.M.: Asymptotic form of the density
  profile for {G}aussian and {L}aguerre random matrix ensembles with orthogonal
  and symplectic symmetry.
\newblock J. Math. Phys. \textbf{47}(2), 023301, 26 (2006)

\bibitem{MR1707162}
Forrester, P.J., Nagao, T., Honner, G.: Correlations for the orthogonal-unitary
  and symplectic-unitary transitions at the hard and soft edges.
\newblock Nuclear Phys. B \textbf{553}(3), 601--643 (1999)

\bibitem{MR1842786}
Forrester, P.J., Rains, E.M.: Interrelationships between orthogonal, unitary
  and symplectic matrix ensembles.
\newblock In: Random matrix models and their applications, \emph{Math. Sci.
  Res. Inst. Publ.}, vol.~40, pp. 171--207. Cambridge Univ. Press, Cambridge
  (2001)

\bibitem{MR3802426}
Forrester, P.J., Trinh, A.K.: Functional form for the leading correction to the
  distribution of the largest eigenvalue in the {GUE} and {LUE}.
\newblock J. Math. Phys. \textbf{59}(5), 053302, 18 (2018)

\bibitem{MR2178598}
Garoni, T.M., Forrester, P.J., Frankel, N.E.: Asymptotic corrections to the
  eigenvalue density of the {GUE} and {LUE}.
\newblock J. Math. Phys. \textbf{46}(10), 103301, 17 (2005)

\bibitem{MR2560290}
Ghosh, S.: {Skew-Orthogonal Polynomials and Random Matrix Theory}.
\newblock Amer. Math. Soc., Providence (2009)

\bibitem{MR3413957}
Jiang, T., Li, D.: Approximation of rectangular beta-{L}aguerre ensembles and
  large deviations.
\newblock J. Theoret. Probab. \textbf{28}(3), 804--847 (2015)

\bibitem{MR3025686}
Johnstone, I.M., Ma, Z.: Fast approach to the {T}racy-{W}idom law at the edge
  of {GOE} and {GUE}.
\newblock Ann. Appl. Probab. \textbf{22}(5), 1962--1988 (2012)

\bibitem{MR2345243}
Koornwinder, T.H.: The structure relation for {A}skey-{W}ilson polynomials.
\newblock J. Comput. Appl. Math. \textbf{207}(2), 214--226 (2007)

\bibitem{MR2888709}
Ma, Z.: Accuracy of the {T}racy-{W}idom limits for the extreme eigenvalues in
  white {W}ishart matrices.
\newblock Bernoulli \textbf{18}(1), 322--359 (2012)

\bibitem{MR1190440}
Mahoux, G., Mehta, M.L.: A method of integration over matrix variables. {IV}.
\newblock J. Physique I \textbf{1}(8), 1093--1108 (1991)

\bibitem{Mehta04}
Mehta, M.L.: Random Matrices, 3rd edn.
\newblock Elsevier, Amsterdam (2004)

\bibitem{MR652932}
Muirhead, R.J.: {Aspects of Multivariate Statistical Theory}.
\newblock John Wiley \& Sons, New York (1982)

\bibitem{MR1313625}
Nagao, T., Forrester, P.J.: Asymptotic correlations at the spectrum edge of
  random matrices.
\newblock Nuclear Phys. B \textbf{435}(3), 401--420 (1995)

\bibitem{MR1142971}
Nagao, T., Wadati, M.: Correlation functions of random matrix ensembles related
  to classical orthogonal polynomials.
\newblock J. Phys. Soc. Japan \textbf{60}(10), 3298--3322 (1991)

\bibitem{MR1162979}
Prudnikov, A.P., Brychkov, Y.A., Marichev, O.I.: {Integrals and Series. {V}ol.
  4: Direct Laplace Transforms}.
\newblock Gordon and Breach Science Publishers, New York (1992)

\bibitem{MR0372517}
Szeg\H{o}, G.: Orthogonal Polynomials, 4th edn.
\newblock Amer. Math. Soc., Providence (1975)

\bibitem{MR1657844}
Tracy, C.A., Widom, H.: Correlation functions, cluster functions, and spacing
  distributions for random matrices.
\newblock J. Statist. Phys. \textbf{92}(5-6), 809--835 (1998)

\bibitem{MR1675356}
Widom, H.: On the relation between orthogonal, symplectic and unitary matrix
  ensembles.
\newblock J. Statist. Phys. \textbf{94}(3-4), 347--363 (1999)

\bibitem{arXiv:2309.06733}
Yao, L., Zhang, L.: Asymptotic expansion of the hard-to-soft edge transition
  (2023).
\newblock \urlprefix\url{https://arxiv.org/abs/2309.06733v1}

\end{thebibliography}

\end{document}